\documentclass[a4paper,11pt,twoside]{article}
\usepackage{amssymb,amsmath,latexsym,geometry,fancyhdr,lineno,hyperref,titletoc}
\contentsmargin{0pt}

\dottedcontents{section}[2em]{\vspace{-1mm}\small}{2em}{0pt}
\dottedcontents{subsection}[5em]{\vspace{-1mm}\small}{3em}{5pt}

\geometry{left=2.7cm,right=2.7cm,top=4.0cm,bottom=3.5cm}
\setlength{\headheight}{0.5cm}
\setlength{\headsep}{0.5cm}
\setlength{\footskip}{0.5cm}
\setlength{\parskip}{1mm}
\linespread{1.1}
\hypersetup{colorlinks,linkcolor= blue,citecolor=blue}
\newtheorem{theorem}{Theorem}[section]

\newtheorem{lemma}[theorem]{Lemma}
\newtheorem{proposition}[theorem]{Proposition}
\newtheorem{remark}[theorem]{Remark}

\numberwithin{equation}{section}
\allowdisplaybreaks[0]

\raggedbottom
\begin{document}
\title{{\bf\Large  Nonradial normalized solutions for a quasilinear Schr\"{o}dinger equation via dual approach}}
\author{\\
{ \textbf{\normalsize Lin Zhang}}\\
{\it\small College of Mathematics and Statistics,}\\
{\it\small Xinyang Normal University,}\\
{\it\small Henan 464000, P.R.China}\\
{\it\small e-mail address:
LinZhangYH@163.com}
}
\date{}
\maketitle
{\bf\normalsize Abstract.} {\small
In this paper, we will utilize the dual method to construct multiple nonradial normalized solutions of the following quasilinear Schr\"{o}dinger equation:
\begin{equation*}
  -\Delta u-\Delta(|u|^{2})u-\mu u=|u|^{p-2}u, \qquad in \quad \mathbb{R}^N,
\end{equation*}
subject to a mass-subcritical constraint. It should be emphasized that the nonradial result of this equation is new. Besides that, when considering the nonradial problem, it is necessary to construct a new workspace to ensure the compactness. Meanwhile, in this paper, we will expand on the method mentioned in \cite{ZLW22TMNA}.}

\medskip
{\bf\normalsize 2010 MSC:} {\small 35B38, 35J62}

\medskip
{\bf\normalsize Key words:} {\small quasi-linear Schr\"{o}dinger equations; nonradial normalized solutions; dual method; the minimax principle.}


\pagestyle{fancy}
\fancyhead{} 
\fancyfoot{} 
\renewcommand{\headrulewidth}{0pt}
\renewcommand{\footrulewidth}{0pt}
\fancyhead[CE]{ \textsc{Lin Zhang}}
\fancyhead[CO]{ \textsc{Multiple solutions to
a class of $p$-Laplacian Schr\"{o}dinger equations}}
\fancyfoot[C]{\thepage}


\newpage
\section{Introduction}

  In this paper, we investigate the multiple non-radial normalized solutions to a class of
quasi-linear Schr\"{o}dinger equations:
\begin{equation*}
 i\Psi_{t}=\Delta \Psi+\Delta(|\Psi|^{2})\Psi+|\Psi|^{p-2}\Psi,
\quad (t,x)\in \mathbb{R}^{+}\times\mathbb{R}^N.
\end{equation*}
We consider the standing wave solutions of the form: $\Psi(t,x)= e^{i\mu t}u(x), $ where $\mu<0$ , and $u(x):\mathbb{R}^N\rightarrow \mathbb{R}$ is a real value function with $||u||_{L^2}^{2}=\lambda>0$.
This leads us to mainly consider the existence and multiplicity of solutions of (\ref{qneq1.1}) under the constraint $\mathcal{X}_{\lambda}:=\{u\in H^{1}(\mathbb{R}^N)| \int_{\mathbb{R}^N}|\nabla u|^2 u^2 dx<\infty\ ,\|u\|^{2}_{2}=\lambda\}$,
\begin{equation}\label{qneq1.1}
\left\{
\begin{aligned}
& -\Delta u-\Delta(|u|^{2})u-\mu u=|u|^{p-2}u,\\
& \int_{\mathbb{R}^N}|u|^2dx =\lambda,
\end{aligned}
\right.
\end{equation}
where $\mu$ appears as a Lagrange multiplier. It should be noted that $u\in \mathcal{X}_{\lambda}$ is called a weak solution if $J'(u)\psi = \mu \int_{\mathbb{R}^N} u \psi dx $ for any $\psi \in C^\infty_0(\mathbb{R}^N)$, where
\begin{equation}\label{nfj1.2}
J(u)=\frac{1}{2}\int_{\mathbb{R}^N}|\nabla u|^2dx+\int_{\mathbb{R}^N}|\nabla u|^2 u^2 dx-\frac{1}{p}\int_{\mathbb{R}^N}|u|^p dx .
\end{equation}
and \[  J'(u)\phi:=\lim_{t\rightarrow 0^+}\frac{J(u+t\phi)-J(u)}{t}.\]

Our main result is the following theorem.
\begin{theorem}\label{Thm1.1}
\rm{  Assume  $m\in [2,\frac{N}{2}]$ is a fixed integer, $N\geq 4$, $N-2m\neq 1$ and $p\in (2,4+\frac{4}{N})$. Then the following statements hold.\\
(i). Let $p\in(2, 2+\frac{4}{N})$. For each $\lambda>0$ fixed, the equation (\ref{qneq1.1}) has infinitely many  pairs of non-radial solutions.\\
(ii). Let $p\in [2+\frac{4}{N}, 4+\frac{4}{N})$. For each $k\in \mathbb{N}$, there exists a $\lambda_{k}>0, $  such that the equation (\ref{qneq1.1}) has at least $k$ pairs of distinct non-radial solutions for all $\lambda>\lambda_{k}$.\\
(iii). For fixed $\lambda$, if $(u_{k})$ is the solution sequence of equation (\ref{qneq1.1}), then $J(u_{k})<0$, and $J(u_{k})\rightarrow 0$, as $k\rightarrow \infty$.}
\end{theorem}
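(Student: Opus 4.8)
The plan is to combine the Colin--Jeanjean dual transformation with a genus-type symmetric minimax performed in a non-radial invariant subspace. First I would pass to the dual variable: let $f\in C^\infty(\mathbb R)$ be the odd function with $f'(t)=(1+2f^2(t))^{-1/2}$ and $f(0)=0$, which enjoys the standard estimates $0<f'\le1$, $|f(t)|\le|t|$, $|f(t)|\le 2^{1/4}|t|^{1/2}$, $|tf'(t)|\le|f(t)|$ and $f(t)^2\le\min\{t^2,\sqrt2\,|t|\}$. Writing $u=f(v)$ and using $f'(t)^2(\tfrac12+f(t)^2)=\tfrac12$, one gets $J(f(v))=I(v):=\tfrac12\int_{\mathbb R^N}|\nabla v|^2\,dx-\tfrac1p\int_{\mathbb R^N}|f(v)|^p\,dx$, which is of class $C^1$ on $H^1(\mathbb R^N)$, while the mass constraint becomes $v\in\mathcal M_\lambda:=\{v\in H^1(\mathbb R^N):\int_{\mathbb R^N}f(v)^2\,dx=\lambda\}$, a $C^1$ manifold because $f'>0$. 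A routine check shows that constrained critical points of $I$ on $\mathcal M_\lambda$ (with multiplier $\mu$) are exactly the weak solutions $u=f(v)$ of \eqref{qneq1.1}, and $J(u)=I(v)$; thus it suffices to produce critical points of $I|_{\mathcal M_\lambda}$ and read off their levels.

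Next I would build the workspace ensuring both non-radiality and compactness. Since $2\le m\le N/2$, decompose $\mathbb R^N=\mathbb R^m\times\mathbb R^m\times\mathbb R^{N-2m}$, let $G=O(m)\times O(m)\times O(N-2m)$ act coordinatewise, set $\tau(x_1,x_2,x_3)=(x_2,x_1,x_3)$, and define
\[
X:=\{\,v\in H^1(\mathbb R^N)\ :\ v(g\,\cdot)=v\ \ \forall g\in G,\quad v(\tau\,\cdot)=-v\,\}.
\]
Any nonzero $v\in X$ is non-radial, since a radial function is $\tau$-invariant. Because $N-2m\ne1$, each factor of $G$ is trivial or of the form $O(d)$ with $d\ge2$, whence the embedding $X\hookrightarrow L^q(\mathbb R^N)$ is compact for every $q\in(2,2^*)$. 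As $I$ and $v\mapsto\int f(v)^2$ are even and $G$-invariant and the action $v\mapsto -v(\tau\cdot)$ is an isometric involution whose fixed-point set (within the $G$-invariant functions) is $X$, the principle of symmetric criticality lets us restrict to $I|_{\mathcal M_\lambda\cap X}$: its critical points are critical points of $I|_{\mathcal M_\lambda}$, hence non-radial solutions of \eqref{qneq1.1}.

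Then I would run the symmetric minimax. Combining the Gagliardo--Nirenberg inequality with the bounds on $f$ (in particular $\|\nabla f(v)\|_2\le\|\nabla v\|_2$ and $\|f(v)\|_2^2=\lambda$ on $\mathcal M_\lambda$) shows that $I$ is bounded below on $\mathcal M_\lambda$: unconditionally when $p\in(2,2+\tfrac4N)$, and still so when $p\in[2+\tfrac4N,4+\tfrac4N)$, where however its negative sublevels are substantial only for large mass. Put $\Gamma_k:=\{A\subset\mathcal M_\lambda\cap X:\ A\ \text{compact, symmetric, }\gamma(A)\ge k\}$ and $c_k:=\inf_{A\in\Gamma_k}\sup_{v\in A}I(v)$. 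There are two substantive points: (a) $\Gamma_k\ne\varnothing$ and $c_k<0$, proved by assembling $k$ translates of a fixed bump with mutually disjoint supports, pairing them through $\tau$ to land in $X$, and rescaling (via $u\mapsto s^{-N/2}u(\cdot/s)$ in the original variable) so that the nonlinear term dominates and $I<0$ on the resulting $k$-dimensional symmetric set; in case (ii) this construction succeeds precisely when $\lambda$ exceeds a threshold $\lambda_k$, with $\lambda_k\to\infty$; and (b) the Palais--Smale condition for $I|_{\mathcal M_\lambda\cap X}$ at every negative level. Granting (a)--(b), the equivariant minimax theorem of \cite{ZLW22TMNA} gives that each $c_k$ is a critical value, the sequence $(c_k)$ is non-decreasing, negative, and converges to $0$, and distinct values yield distinct antipodal pairs $\pm v_k$ (a standard argument covers coincident values, the critical set then having genus $\ge2$). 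This produces infinitely many non-radial pairs for every $\lambda$ in case (i), at least $k$ pairs for $\lambda>\lambda_k$ in case (ii), and, setting $u_k=f(v_k)$, the levels $J(u_k)=c_k\in(-\infty,0)$ with $J(u_k)\to0$, which is (iii).

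The main obstacle is step (b), the compactness in the non-radial workspace. Given a Palais--Smale sequence $(v_n)\subset\mathcal M_\lambda\cap X$ at a level $c<0$, one must first establish that it is bounded in $H^1(\mathbb R^N)$ --- the gradient part from the Gagliardo--Nirenberg estimate above, the $L^2$ part from a test-function/Pohozaev argument, since $\mathcal M_\lambda$ itself is unbounded in $H^1$ --- and that the approximate multipliers $\mu_n$ are bounded and bounded away from $0$ with the correct sign $\mu_n\to\mu<0$, which forbids the weak limit from being trivial or from leaving $\mathcal M_\lambda$. The compact embedding $X\hookrightarrow L^p$ then yields $f(v_n)\to f(v)$ in $L^p(\mathbb R^N)$; the delicate part is passing to the limit in the nonlinear constraint $\int f(v_n)^2=\lambda$ under merely weak $L^2$ convergence of $v_n$, which I would handle with a Brezis--Lieb splitting for $t\mapsto f(t)^2$ together with $f(t)^2\le\min\{t^2,\sqrt2\,|t|\}$ to rule out mass escaping to infinity, and finally upgrade to $v_n\to v$ in $H^1$. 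The $\lambda$-dependence of the sublevel geometry in case (ii) and the validity of the deformation lemma in this constrained, symmetric, dual setting are the remaining technical hurdles.
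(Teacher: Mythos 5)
Your proposal follows essentially the same route as the paper: the Colin--Jeanjean dual change of variable $u=f(v)$, the same symmetric workspace $H^1_{\mathbf G}\cap\{v(\tau\cdot)=-v\}$ with $\mathbf G=\mathfrak O(m)\times\mathfrak O(m)\times\mathfrak O(N-2m)$ giving compactness and non-radiality, genus-based minimax levels $\beta_{\lambda,k}<0$ built from compactly supported symmetric sets, and the $(PS)_c$ verification for $c<0$ via the sign of the multiplier and a Br\'ezis--Lieb/mean-value argument for $t\mapsto f(t)^2$ to recover the nonlinear constraint. The only notable presentational difference is that you invoke symmetric criticality explicitly while the paper handles the Lagrange-multiplier issue on the non-spherical constraint manifold through a tangent-space characterization (its Lemma 3.1); both resolve the same difficulty.
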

\begin{remark}\label{Rek1.1}
\rm{The non-radial solutions given by Theorem \ref{Thm1.1} are sign-changing.
}
\end{remark}

In the recent forty years, there are many researches on the sign-changing solutions of nonlinear elliptic equations. In 1983, the problem of the existence of non-radial solutions for a class of semilinear elliptic equations was first given by Berestycki and Lions in \cite{2BL83ARMA}, and had been open for a long time. After ten years, in \cite{BW93JFA} Bartsch and Willem found an unbounded sequence of non-radial solutions of (\ref{seq1.3}) for $N=4$ or $N\geq 6$,
\begin{equation}\label{seq1.3}
\left\{
\begin{aligned}
& -\Delta u-b(|x|)u=g(|x|,u)\\
& u\in H^{1}(\mathbb{R}^{N})
\end{aligned}
\right.,
\end{equation}
provided $b$ and $g$ satisfy certain growth conditions and $f$ is odd in $u$. After that, the study of non-radial solutions of semilinear equations can be divided into two categories. One is consider the semilinear equations without prescribing the $L^{2}$-constraint(e.g.,\cite{ B99Nankai, B01JFA, BCW00JMZ, BW96TMNA, BW99TMNA,BW03TMNA,  CCN97JMA, CCN98EJDE, LZ00AMS,   LS01JDE,  LW08FMC, MPW12JEMS} and references therein). The another one is consider the existence of non-radial normalized solutions, see \cite{JL19No, JL20CVPDE}. It is worth mentioning that \cite{JL19No} first considers the existence of non-radial normalized solutions for mass-subcritical. Different from \cite{JL19No}, \cite{JL20CVPDE} considers the existence of non-radial normalized solutions for mass-supercritical.

Besides that, there are some studies on the the quasilinear equation without the $L^{2}$-constraint. For example, in \cite{LLW14CPDE}, Liu, Liu, and Wang had used the variational perturbation method to illustrate the multiplicity of sign-changing solutions for the following general quasilinear equations
\begin{equation}\label{geq1.4}
\int_{\Omega}\Sigma_{i.j=1}^{N}a_{ij}(x,u)D_{i}uD_{j}\phi dx+\frac{1}{2}\int_{\Omega}\Sigma_{i.j=1}^{N}D_{s}a_{ij}(x,u)D_{i}uD_{j}u\phi dx-\int_{\Omega}f(x,u)\phi dx=0,
\end{equation}
where $ \Omega \subset \mathbb{R}^{N}$ is a bounded domain with smooth boundary. In \cite{LWW04CPDE}, Liu, Wang and Wang had obtained the existence of sign-changing solutions for a class of quasilinear equations by using the Nehari method.

However, under the mass-subcritical case, there is no research on the multiplicity of non-radial normalized solutions for equation (\ref{qneq1.1}). And it should be noted that the constraint minimization method and the variational perturbation method are not applicable here. The reason is that the constraint minimization methods are commonly used to prove the existence of solutions. For the variational perturbation method, when we use the minimax principle to obtain multiple critical points of the following perturbed functional  \begin{equation}\label{pfj1.7}
J_{\nu}(u):=\frac{\nu}{4}\int_{\mathbb{R}^N}|\nabla u|^4 dx+J(u),
\end{equation}
which is defined on $W^{1,4}_0,$
and $\nu \in (0,1]$ is a parameter, the critical values we expect to obtain are accumulate at zero level from below. This makes it difficult to distinguish critical values after passing to limit with $\nu \rightarrow 0$ in the perturbation setting as done in \cite{LW14JDE, LLW13PAMS, LLW13JDE}.

Our plan in this paper is to extend the method used in \cite{ZLW22TMNA}. It should be noted that the result on the multiplicity of non-radial normalized solutions of (\ref{qneq1.1}) seems to be new. Here, we use the duality method to transform the qusilinear equation (\ref{qneq1.1}) into the semilinear problem
\begin{equation}\label{deq1.8}
\left\{
\begin{aligned}
& - \Delta v =|f|^{p-2}ff^{'}(v)+\mu ff^{'}(v),\\
& \int_{\mathbb{R}^N}|f(v)|^2dx =\lambda,
\end{aligned}
\right.
\end{equation}
where $v:=f^{-1}(u), f^{'}(t)=\frac{1}{\sqrt{1+2f^{2}(t)}}, -f(-t)=f(t).$ Then the variational functional (\ref{nfj1.2}) can be transformed into
\begin{equation}\label{dnfj1.10}
I(v)=\frac{1}{2}\int_{\mathbb{R}^N}|\nabla v|^2 dx-\frac{1}{p}\int_{\mathbb{R}^N}|f(v)|^p dx.
\end{equation}

In order to prove the multiplicity of non-radial normalized solutions to the equation (\ref{qneq1.1}), it is not only necessary to use the dual approach to transform the problem, but also to reset the workspace to ensure the compactness. Then, we will consider the multiplicity of non-radial normalized solutions on the constraint $\mathbb{Y}_{\lambda}=\mathbf{Y}_{\mathbf{G}}\cap Y_{\lambda}$, where
\begin{equation}\label{Yn1.11}
Y_\lambda=\{ v\in H^1(\mathbb{R}^N),  ||f(v)||^{2}_{2} =\lambda\},
\end{equation}
the definition of $\mathbf{Y}_{\mathbf{G}}$ will be given based on the following notations. Assume  $m\in [2,\frac{N}{2}]$ is a fixed integer, $N\geq 4$ and $N-2m\neq 1$. Fixing $\tau\in\mathfrak{O}(N)$ satisfies $\tau(x_{1}, x_{2}, x_{3})=(x_{2}, x_{1}, x_{3})$ for $x_{1}, x_{2}\in\mathbb{R}^{m}$ and $x_{3}\in\mathbb{R}^{N-2m}$, where $x=(x_{1}, x_{2}, x_{3})\in\mathbb{R}^{N}=\mathbb{R}^{m}\times\mathbb{R}^{m}\times\mathbb{R}^{N-2m}$ and $\mathfrak{O}(N):=\{\mathrm{O}_{N\times N}| ~orthogonal ~metrices\}$. Now we define
$$\mathfrak{X}_{\tau}:=\{u\in\mathcal{X}, u(\tau x)=-u(x) ~for ~all~ x\in \mathbb{R}^{N} \},$$
$$\mathfrak{Y}_{\tau}:=\{v\in H^{1}(\mathbb{R}^{N}), v(\tau x)=-v(x) ~for ~all~ x\in \mathbb{R}^{N} \},$$
$$\mathbf{G}:=\mathfrak{O}(m)\times\mathfrak{O}(m)\times\mathfrak{O}(N-2m).$$
Let $H^{1}_{\mathbf{G}}(\mathbb{R}^{N})$ denote the subspace of invariant functions with respect to $\mathbf{G}$, where $\mathbf{G}\subset\mathfrak{O}(N)$ acts isometrically on $H^{1}(\mathbb{R}^{N})$. It is clear that $0$ is the only radial function of $\mathfrak{X}_{\tau}$ and $\mathfrak{Y}_{\tau}$. For convenience, we set $\mathbf{X}_{\mathbf{G}}:=H^{1}_{\mathbf{G}}(\mathbb{R}^{N})\cap\mathfrak{X}_{\tau}$ and $\mathbf{Y}_{\mathbf{G}}:=H^{1}_{\mathbf{G}}(\mathbb{R}^{N})\cap\mathfrak{Y}_{\tau}$. It should be emphasized that the embedding $\mathbf{Y}_{\mathbf{G}}\subset L^{p}(\mathbb{R}^{N})$ is compact, see \cite{L82JFA, W96B}. The reason for defining $\mathfrak{Y}_{\tau}$ is to serve the dual functional (\ref{dnfj1.10}).  We also point out that due to this change of variable, we would lose the $L^2$-mass invariance of the problem, which in mathematical terms is a certain scaling invariance of the problem and was used heavily in the arguments for the existence of normalized solutions in the classical works \cite{C03NYU, CL82CMP, CLW12EJAM, CJS10NA}.
Our work will fill in this part to have the new constraint about $f(v)$ worked out for the theory of multiplicity.

The paper is organized as follows.
In Section 2, first we will introduce the framework of the dual method for normalized solutions by showing and using some properties of the dual function $f$. Lemma 2.2 will give the equivalence relation between (\ref{nfj1.2}) and (\ref{dnfj1.10}), that is, $I(v)=J(u)$ under the constraint $\mathbb{Y}_{\lambda}:=\mathbf{Y}_{\mathbf{G}}\cap Y_{\lambda}$ and $\mathbb{X}_{\lambda}:=\mathbf{X}_{\mathbf{G}}\cap\mathcal{X}_{\lambda}$ respectively.

In Section 3, we will use the minimax principle and the deformation argument to establish the multiplicity of non-radial solutions. It is worth mentioning that due to the dual function $f$ being nonlinear, when the functional (\ref{dnfj1.10}) is considered under the constraint $\mathbb{Y}_{\lambda}$, we need to overcome two difficulties. The one is that the stretching method for quasi-linear  functional (\ref{nfj1.2}) is not applicable here. The another one is that the explanation of the existence of Lagrange multipliers mentioned in [\cite{2BL83ARMA}, Lemma 3] is no longer applicable here. Instead we will use the approximation (Lemma \ref{Lem3.3}) and the properties of genus to prove the Theorem \ref{Thm1.1}.
For a matter of convenience,  we will use $\int$ to represent $\int_{\mathbb{R}^{N}}$.

\section{The dual framework}

For completeness, we give the following lemmas. However, to avoid repetitions, the proof of the following lemmas can be referred to \cite{ZLW22TMNA}.
\begin{lemma}
\label{Lem2.1}
\rm{ $f^{'}(t)=\frac{1}{\sqrt{1+2f^{2}(t)}}, t\geq 0, f(t)=-f(-t)$
\begin{enumerate}
 \item[(1)] $f(t)$ is uniquely defined , invertible and smooth;
 \item[(2)] $\lim_{t\rightarrow 0}\frac{|f(t)|}{|t|}=1$ , $\lim_{t\rightarrow\infty}\frac{|f(t)|}{\sqrt{|t|}}=2^{\frac{1}{4}}$;
 \item[(3)] $|f^{'}(t)|\leq 1$ , $|f(t)|\leq|t|$;
 \item[(4)] $\frac{|f(t)|}{2}\leq|t|f^{'}(t)\leq|f(t)|$;
 \item[(5)] $|f(t)|\leq 2^{\frac{1}{4}}|t|^{\frac{1}{2}}$ , for all $t\in\mathbb{R}$;
 \item[(6)] $there~~ exist ~~C_1, C_2>0 $ , such that
 $|f(t)|\geq C_1|t| $, if $|t|\leq 1$; $|f(t)|\geq C_2|t|^{\frac{1}{2}}$, if $|t|\geq 1$.
\end{enumerate}
}
\end{lemma}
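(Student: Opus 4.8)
The function $f$ is defined implicitly as the solution of the ODE initial value problem $f'(t) = 1/\sqrt{1+2f^2(t)}$ on $[0,\infty)$ with $f(0)=0$, extended to $\mathbb{R}$ as an odd function. The plan is to verify each of the six items in turn using standard ODE theory together with direct estimates on the defining relation. First I would establish (1): the right-hand side $g(s) = 1/\sqrt{1+2s^2}$ is smooth, bounded, and Lipschitz on $\mathbb{R}$, so by the Picard--Lindel\"of theorem there is a unique global $C^1$ solution with $f(0)=0$; bootstrapping through the ODE gives $f\in C^\infty$. Since $f'(t)>0$ everywhere, $f$ is strictly increasing, hence invertible onto its range; the odd extension is consistent because $t\mapsto -f(-t)$ solves the same IVP. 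It is convenient to record the integral identity $\displaystyle t = \int_0^{f(t)} \sqrt{1+2s^2}\,ds$ for $t\ge 0$, i.e. $f^{-1}(u) = \int_0^u \sqrt{1+2s^2}\,ds$, which drives the remaining asymptotics.

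For (2), near $t=0$ we have $f(t)\to 0$, so $f'(t)\to 1$ and hence $f(t)/t\to 1$ by L'H\^opital (or by the integral identity: $f^{-1}(u)\sim u$ as $u\to 0$). As $t\to\infty$, from $f^{-1}(u)=\int_0^u\sqrt{1+2s^2}\,ds \sim \frac{1}{\sqrt2}u^2$ we get $t\sim \frac{1}{\sqrt2}f(t)^2$, so $f(t)/\sqrt t \to 2^{1/4}$. For (3): $|f'(t)| = 1/\sqrt{1+2f^2(t)} \le 1$ is immediate from the defining ODE, and then $|f(t)| = |\int_0^t f'(s)\,ds| \le |t|$ by integrating this bound (using oddness to reduce to $t\ge 0$). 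For (4): the key computation is $\frac{d}{dt}\big(t f'(t)\big)$ or, more directly, observe from the ODE that $|t| f'(t) \le |f(t)|$ will follow from concavity-type considerations, while the lower bound $|f(t)|/2 \le |t| f'(t)$ I would get by writing $|t| = \int_0^{|f(t)|}\sqrt{1+2s^2}\,ds \le |f(t)|\sqrt{1+2f^2(t)}$ (since the integrand is increasing), hence $|t| f'(t) = |t|/\sqrt{1+2f^2(t)} \ge |f(t)|$ — wait, this must be checked against the claimed direction; the correct bookkeeping is that $\int_0^{a}\sqrt{1+2s^2}\,ds$ lies between $a$ and $a\sqrt{1+2a^2}$, and dividing by $\sqrt{1+2a^2}$ with $a=|f(t)|$ yields exactly the two-sided bound $\frac{|f(t)|}{\sqrt{1+2f^2(t)}} \le |t|f'(t)\cdot(\text{const})$; I would present this cleanly via the monotonicity of the integrand. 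For (5): combine (2)'s asymptotics with (3) — more concretely, from $|t| = \int_0^{|f(t)|}\sqrt{1+2s^2}\,ds \ge \int_0^{|f(t)|}\sqrt{2}\,s\,ds\big/(\cdots)$; cleanest is $|t|\ge \int_0^{|f(t)|}\sqrt{2s^2}\,ds$ is false near $0$, so instead use $|t| \ge \frac{1}{\sqrt2}\int_0^{|f(t)|} \frac{2s^2}{\sqrt{1+2s^2}}\,ds$ type bounds, or simply prove the global inequality $f(t)^2 \le \sqrt2\,|t|$ by checking it holds at $t=0$ and comparing derivatives. For (6): for $|t|\le 1$, continuity and $f(0)=0$, $f'(0)=1$ give a linear lower bound with $C_1 = \min_{|t|\le 1} f(t)/t > 0$; for $|t|\ge 1$, the asymptotic $f(t)\sim 2^{1/4}\sqrt{|t|}$ plus positivity and monotonicity of $f(t)/\sqrt{|t|}$ on $[1,\infty)$ give the constant $C_2$.

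The main obstacle is purely bookkeeping: getting the direction and constants right in items (4) and (5), where several of the inequalities are sharp and a careless estimate flips the inequality. The clean device that handles all of them uniformly is the integral representation $f^{-1}(u) = \int_0^u \sqrt{1+2s^2}\,ds$ together with the elementary two-sided bound $u\sqrt{\phi(0)} \le \int_0^u \sqrt{\phi(s)}\,ds \le u\sqrt{\phi(u)}$ for the increasing function $\phi(s)=1+2s^2$; everything else is substitution and L'H\^opital. Since the statement explicitly refers to \cite{ZLW22TMNA} for the proof, I would in the paper simply cite that reference and, if desired, include the integral-identity observation as the one-line key that makes all six items transparent.
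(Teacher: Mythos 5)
The paper itself contains no proof of this lemma --- it simply defers to \cite{ZLW22TMNA} --- so the only question is whether your self-contained sketch is sound. It mostly is: the ODE existence/uniqueness argument for (1), the integral identity $t=\int_0^{f(t)}\sqrt{1+2s^2}\,ds$, L'H\^opital for (2), the derivative comparison $\frac{d}{dt}f(t)^2=\frac{2f}{\sqrt{1+2f^2}}\le\sqrt2$ for (5), and the compactness/limit argument for (6) all work, and this is the standard route. But there is one genuine gap, precisely at the spot where you yourself hesitate: the ``uniform clean device'' $u\sqrt{\phi(0)}\le\int_0^u\sqrt{\phi(s)}\,ds\le u\sqrt{\phi(u)}$ does \emph{not} prove the lower inequality in (4). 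With $a=f(t)$ it gives only $t\ge a$, hence $tf'(t)\ge a/\sqrt{1+2a^2}$, and $a/\sqrt{1+2a^2}$ is bounded (by $1/\sqrt2$) while $a/2\to\infty$, so this is strictly weaker than the claimed $tf'(t)\ge f(t)/2$ as soon as $f(t)>\sqrt{3/2}$. As written, item (4) is not established.

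The repair is short and you should make it explicit. Either check directly that $G(a):=\int_0^a\sqrt{1+2s^2}\,ds-\frac{a}{2}\sqrt{1+2a^2}$ satisfies $G(0)=0$ and $G'(a)=\frac{1}{2\sqrt{1+2a^2}}>0$, which gives exactly $t\ge\frac{f(t)}{2}\sqrt{1+2f(t)^2}$ and hence $tf'(t)\ge f(t)/2$; or invoke the midpoint rule for the convex integrand, $\int_0^a\sqrt{1+2s^2}\,ds\ge a\sqrt{1+a^2/2}\ge\frac{a}{2}\sqrt{1+2a^2}$. Two further bookkeeping points: your monotonicity claim for $f(t)/\sqrt{t}$ in (6) is equivalent to the lower bound in (4) (its derivative has the sign of $tf'(t)-f(t)/2$), so (4) must be proved first and cannot be taken for granted there; and if you instead try the differential route $h(t)=2tf'(t)-f(t)$ for (4), note that the naive estimate $4tff'^3\le 4f^2f'^2=\frac{4f^2}{1+2f^2}\le 2$ does not show $h'\ge0$, so the integral argument above really is the cleaner one.
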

\begin{lemma}\label{Lem2.2}
\rm{
Let $u=f(v)$, then
\begin{enumerate}
  \item[(i):] $Y:=\{u=f(v)\,| v\in H^{1}(\mathbb{R}^{N})\}= {\mathcal X} =\{u\in H^{1}(\mathbb{R}^N)| \int_{\mathbb{R}^N}|\nabla u|^2 u^2 dx<\infty\};$
  \item[(ii):] $\mathfrak{X}_{\tau}=f(\mathfrak{Y}_{\tau});$
  \item[(iii):] for each $v\in H^{1}(\mathbb{R}^{N}), \bar{J}(u)=\bar{I}(v)$, where
  \begin{equation*}
\bar{J}(u)=\frac{1}{2}\int_{\mathbb{R}^N}|\nabla u|^2 dx+\int_{\mathbb{R}^N}|\nabla u|^2 u^2 -\frac{\mu}{2}|u|^2 dx-\frac{1}{p}\int_{\mathbb{R}^N}|u|^p dx ,
\end{equation*}
\begin{equation*}
\bar{I}(v)=\frac{1}{2}\int_{\mathbb{R}^N}|\nabla v|^2 dx-\mu|f(v)|^2 dx-\frac{1}{p}\int_{\mathbb{R}^N}|f(v)|^p dx.
\end{equation*}
\end{enumerate}}
\end{lemma}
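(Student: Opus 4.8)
The three assertions form the dictionary that lets us pass between the quasilinear setting (the variable $u$, the functional $\bar J$, the space $\mathcal X$, its reflection‑antisymmetric subspace $\mathfrak X_{\tau}$) and the semilinear setting (the variable $v\in H^{1}(\mathbb R^{N})$, the functional $\bar I$, the subspace $\mathfrak Y_{\tau}$), all through the substitution $u=f(v)$. The whole argument rests on the elementary pointwise estimates for $f$ recorded in Lemma \ref{Lem2.1}, on the chain rule $\nabla f(v)=f'(v)\nabla v$, and on the defining relation $f'(t)^{2}\bigl(1+2f(t)^{2}\bigr)=1$.

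For (i) I would prove the two inclusions separately. If $v\in H^{1}(\mathbb R^{N})$ and $u=f(v)$, then $|u|\le|v|$ by Lemma \ref{Lem2.1}(3), so $u\in L^{2}$; since $\nabla u=f'(v)\nabla v$ with $|f'(v)|\le 1$, also $\nabla u\in L^{2}$, hence $u\in H^{1}$; and $|\nabla u|^{2}u^{2}=f'(v)^{2}f(v)^{2}|\nabla v|^{2}=\frac{f(v)^{2}}{1+2f(v)^{2}}|\nabla v|^{2}\le\frac12|\nabla v|^{2}$, so $\int|\nabla u|^{2}u^{2}\,dx<\infty$ and $u\in\mathcal X$. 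Conversely, let $u\in\mathcal X$ and put $v:=f^{-1}(u)$, which is well defined, smooth and odd by Lemma \ref{Lem2.1}(1); formally $\nabla v=\sqrt{1+2u^{2}}\,\nabla u$, so $|\nabla v|^{2}=|\nabla u|^{2}+2u^{2}|\nabla u|^{2}\in L^{1}$ precisely because $u\in\mathcal X$. The only point needing care is $v\in L^{2}$: splitting $\mathbb R^{N}=\{|v|\le 1\}\cup\{|v|>1\}$ and invoking Lemma \ref{Lem2.1}(6) in the forms $|v|\le C_{1}^{-1}|u|$ on the first set and $|v|\le C_{2}^{-2}|u|^{2}$ on the second, one reduces to showing $u\in L^{4}$. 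Here the structure of $\mathcal X$ enters: $\nabla(u^{2})=2u\nabla u\in L^{2}$ because $\int u^{2}|\nabla u|^{2}\,dx<\infty$, hence $u^{2}\in D^{1,2}(\mathbb R^{N})\hookrightarrow L^{2^{*}}(\mathbb R^{N})$ (legitimate since $N\ge 3$), and interpolating with $u^{2}\in L^{1}$ gives $u^{2}\in L^{2}$, i.e. $u\in L^{4}$. With this the chain rule is justified, $v\in H^{1}(\mathbb R^{N})$, and $Y=\mathcal X$.

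For (ii) I would use only that $f$ and $f^{-1}$ are odd and that the $\mathbf G$‑action and the reflection $\tau$ act by precomposition with isometries of $\mathbb R^{N}$, hence commute with composition by $f$: thus $v(\tau x)=-v(x)$ for all $x$ is equivalent to $f(v(\tau x))=f(-v(x))=-f(v(x))$, and $\mathbf G$‑invariance of $v$ is equivalent to that of $f(v)$; combined with (i) this yields $\mathfrak X_{\tau}=f(\mathfrak Y_{\tau})$, and, intersecting with $H^{1}_{\mathbf G}$, $\mathbf X_{\mathbf G}=f(\mathbf Y_{\mathbf G})$. For (iii) I would simply substitute $u=f(v)$: the lower‑order terms transform trivially, $\int|u|^{2}\,dx=\int|f(v)|^{2}\,dx$ and $\int|u|^{p}\,dx=\int|f(v)|^{p}\,dx$, while for the gradient part $\nabla u=f'(v)\nabla v$ together with $f'(v)^{2}(1+2f(v)^{2})=1$ gives
\[
\tfrac12|\nabla u|^{2}+|\nabla u|^{2}u^{2}=f'(v)^{2}\Bigl(\tfrac12+f(v)^{2}\Bigr)|\nabla v|^{2}=\tfrac12\,f'(v)^{2}\bigl(1+2f(v)^{2}\bigr)|\nabla v|^{2}=\tfrac12|\nabla v|^{2}.
\]
Integrating and summing yields $\bar J(f(v))=\bar I(v)$ for every $v\in H^{1}(\mathbb R^{N})$, which is (iii).

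I expect the main obstacle to be the reverse inclusion in (i) — showing that $f^{-1}$ maps $\mathcal X$ into $H^{1}(\mathbb R^{N})$, equivalently that membership in $\mathcal X$ forces $u\in L^{4}(\mathbb R^{N})$ and makes the chain rule for $f^{-1}(u)$ (whose derivative is unbounded) legitimate. Once this is settled, parts (ii), (iii) and the forward inclusion in (i) are routine manipulations with the estimates of Lemma \ref{Lem2.1}.
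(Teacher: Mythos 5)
Your proof is correct and follows the standard route: the paper itself only gives the oddness argument for (ii) (identical to yours) and defers (i) and (iii) to the cited references, where the same computations appear — the chain rule with $f'(v)^2(1+2f(v)^2)=1$ for (iii), and for the reverse inclusion in (i) the key step $u\in L^4$ via $\nabla(u^2)\in L^2$, $u^2\in D^{1,2}\hookrightarrow L^{2^*}$, interpolated against $u^2\in L^1$. Your write-up supplies exactly the details the paper omits by citation, and correctly identifies the one non-routine point (justifying $f^{-1}(u)\in H^1$).
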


\begin{proof} The proofs of $(i)$ and $(iii)$ can see \cite{AW12NA, ZLW22TMNA}.
For $(ii)$, due to $f(v)=u$, then for each $v\in\mathfrak{Y}_{\tau}$, $u\in\mathfrak{X}_{\tau}$, we can have $-f(v)=f(v(\tau x))=u(\tau x)=-u(x)$. Then according to the proof of $(i)$, easy to get $\mathfrak{X}_{\tau}=f(\mathfrak{Y}_{\tau})$.
\end{proof}
\begin{remark}\label{Rek2.1}
\rm{
According to this lemma, it is easy to see that $I(v)=J(u)$ under the constraint $\mathbb{Y}_{\lambda}:=\mathbf{Y}_{\mathbf{G}}\cap Y_{\lambda}$ and $\mathbb{X}_{\lambda}:=\mathbf{X}_{\mathbf{G}}\cap\mathcal{X}_{\lambda}$ respectively.}
\end{remark}
\begin{lemma}\label{Lem2.3}
\rm{For $p\in (2, 4+\frac{4}{N})$, define
$$I_{\lambda}:=\inf\{I(v)| v\in \mathbb{Y}_{\lambda}\} , \lambda>0. $$
\begin{enumerate}
  \item[(i):] $I_{\lambda}$ is bounded from below, that is $I_{\lambda}> -\infty$;
  \item[(ii):] Assume $(v_{n})$ is the minimizing sequence of $I(v)$ under the constraint $\mathbb{Y}_{\lambda}$, then $(v_n)$ is bounded in $ H^{1}(\mathbb{R}^{N})$.
\end{enumerate}}
\end{lemma}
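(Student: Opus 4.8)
The plan is to transfer everything to the equivalent functional $J$ through Lemma~\ref{Lem2.2} and Remark~\ref{Rek2.1}: for $v\in\mathbb{Y}_{\lambda}$ set $u=f(v)$, so that $I(v)=J(u)=\frac12\int|\nabla u|^2+\int|\nabla u|^2u^2-\frac1p\int|u|^p$ while $\int u^2=\lambda$. Two elementary facts, both coming from the identity $f'(v)^2(1+2f(v)^2)=1$, will carry the estimates: first, $\int|\nabla v|^2=\int|\nabla u|^2+2\int|\nabla u|^2u^2$, so $\nabla(u^2)=2u\nabla u\in L^2$ with $\int|\nabla(u^2)|^2=4\int|\nabla u|^2u^2$ and, in particular, $\int|\nabla u|^2u^2\le\frac12\int|\nabla v|^2<\infty$; second, $u^2\in L^1$ with $\int u^2=\lambda$.

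For (i) I would bound the nonlinear term by applying the Gagliardo--Nirenberg inequality $\|g\|_{r}\le C\|\nabla g\|_{2}^{\theta}\|g\|_{1}^{1-\theta}$ to $g=u^2$ with $r=p/2$ (admissible, since $2\le p<4+\frac4N<\frac{4N}{N-2}$ gives $r\in[1,2^{*})$) and the scaling-determined exponent $\theta=\frac{2N(p-2)}{p(N+2)}$. Raising to the power $p/2$ and substituting $\|u\|_p^p=\|u^2\|_{p/2}^{p/2}$, $\|u^2\|_1=\lambda$ and $\|\nabla(u^2)\|_2^2=4\int|\nabla u|^2u^2$ yields $\int|u|^p\le C(\lambda)\big(\int|\nabla u|^2u^2\big)^{a}$ with $a=\frac{\theta p}{2}=\frac{N(p-2)}{N+2}$. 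The decisive observation is that $a<2$ is \emph{equivalent} to $p<4+\frac4N$, so Young's inequality gives $\frac1p\int|u|^p\le\frac12\int|\nabla u|^2u^2+C(\lambda)$ and hence $I(v)=J(u)\ge\frac12\int|\nabla u|^2+\frac12\int|\nabla u|^2u^2-C(\lambda)\ge-C(\lambda)$, uniformly for $v\in\mathbb{Y}_{\lambda}$; this also shows $\int|u|^p<\infty$, so $I(v)$ is well defined. (For $p<2+\frac4N$ one may instead apply Gagliardo--Nirenberg directly to $u$, using $\int u^2=\lambda$.)

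For (ii), take a minimizing sequence $(v_n)\subset\mathbb{Y}_{\lambda}$; for $n$ large the inequality from (i) gives $\frac12\int|\nabla u_n|^2+\frac12\int|\nabla u_n|^2u_n^2\le I_{\lambda}+1+C(\lambda)$, so $\int|\nabla u_n|^2$ and $\int|\nabla u_n|^2u_n^2$ are bounded, and therefore $\int|\nabla v_n|^2=\int|\nabla u_n|^2+2\int|\nabla u_n|^2u_n^2$ is bounded. It remains to bound $\|v_n\|_2$, and here the loss of the $L^2$-mass relation is felt: $\int f(v_n)^2=\lambda$ controls $v_n$ only where it is small. I would split $\mathbb{R}^N$ into $\{|v_n|\le1\}$ and $\{|v_n|>1\}$ and invoke Lemma~\ref{Lem2.1}(6): on the first region $|v_n|\le C_1^{-1}|f(v_n)|$, hence $\int_{\{|v_n|\le1\}}v_n^2\le C_1^{-2}\lambda$; on the second region $|v_n|\le C_2^{-2}|f(v_n)|^2=C_2^{-2}u_n^2$, hence $\int_{\{|v_n|>1\}}v_n^2\le C_2^{-4}\int u_n^4=C_2^{-4}\|u_n^2\|_2^2$, and a second use of Gagliardo--Nirenberg for $u_n^2$ (target exponent $r=2$) bounds $\|u_n^2\|_2^2\le C\big(\int|\nabla u_n|^2u_n^2\big)^{N/(N+2)}\lambda^{4/(N+2)}$, which is controlled by the previous step. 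Adding the two contributions bounds $\|v_n\|_2$, and together with the gradient bound this shows $(v_n)$ is bounded in $H^1(\mathbb{R}^N)$.

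The step I expect to be the main obstacle is precisely this last one: since $u=f(v)$ does not preserve the $L^2$-norm, boundedness of a minimizing sequence in $H^1$ cannot be read off from the constraint as in the semilinear case, and must be recovered from the sublinear growth of $f$ at infinity combined with the $L^4$-bound on $f(v_n)$ furnished by the quasilinear part of the energy; the restriction $p<4+\frac4N$ is exactly what keeps the relevant Gagliardo--Nirenberg exponent below $2$ in both (i) and (ii).
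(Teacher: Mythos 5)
Your argument is essentially the paper's own proof: both hinge on applying the Sobolev/Gagliardo--Nirenberg inequality to $u^2=f(v)^2$ so that $\int|u|^p$ is controlled by $\lambda$ and a power of $\int|\nabla u|^2u^2$ (equivalently of $\int|\nabla v|^2$) that is subunitary exactly when $p<4+\frac{4}{N}$, and part (ii) is finished by the same device as in the paper, namely splitting $\mathbb{R}^N$ into $\{|v_n|\le1\}$ and $\{|v_n|>1\}$, invoking Lemma \ref{Lem2.1}(6), and bounding $\int|f(v_n)|^4$ by a second Gagliardo--Nirenberg estimate. One bookkeeping correction in (i): since $\|\nabla(u^2)\|_2^{\theta p/2}=\bigl(4\int|\nabla u|^2u^2\bigr)^{\theta p/4}$, the exponent on $\int|\nabla u|^2u^2$ is $\frac{\theta p}{4}=\frac{N(p-2)}{2(N+2)}$, not $\frac{\theta p}{2}=\frac{N(p-2)}{N+2}$; the Young/absorption step needs this exponent to be $<1$ (which is again equivalent to $p<4+\frac{4}{N}$, matching the paper's exponent $\frac{(p-2)N}{2(N+2)}$), whereas an exponent in $(1,2)$ on $\int|\nabla u|^2u^2$ could not be absorbed into $\frac12\int|\nabla u|^2u^2$.
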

\begin{proof}
For (a), by the Gagliardo-Nirenberg inequality, there exists some $C>0$ depend only on $N$, such that for any $v\in \mathbb{Y}_{\lambda}$,
\begin{align*}
\int_{\mathbb{R}^{N}}|f(v)|^{p} dx & \leq\Big(\int_{\mathbb{R}^{N}}|f(v)|^2dx\Big)^{1-\theta}\Big(\int_{\mathbb{R}^{N}}|f(v)|^{\frac{4N}{N-2}}dx\Big)^{\theta} \\
& \leq C\lambda^{1-\theta}\Big(\int_{\mathbb{R}^{N}}|\nabla u|^2 u^2dx\Big)^{\frac{\theta N}{N-2}}\\
& =C\lambda^{\frac{4N-(N-2)p}{2(N+2)}}\Big(\int_{\mathbb{R}^{N}}|\nabla u|^{2}u^{2}dx\Big)^{\frac{(p-2)N}{2(N+2)}}<\infty,
\end{align*}
where \,$\theta=\frac{(p-2)(N-2)}{2(N+2)}$, and \,$\frac{\theta N}{N-2}<1$, when \,$p<4+\frac{4}{N}$. Thus $I(v)>-\infty$, \,$(i)$\, is proved.

For (b), according to the calculation in \,$(i)$\, and \,$|\nabla v|^{2}=|\nabla u|^{2}+2|\nabla u|^{2}u^{2}$\,
\begin{align*}
\int_{\mathbb{R}^{N}}|f(v)|^{p} dx
&\leq C\lambda^{\frac{4N-(N-2)p}{2(N+2)}}\Big(\int_{\mathbb{R}^{N}}(2|\nabla u|^{2}+1)u^{2}dx\Big)^{\frac{(p-2)N}{2(N+2)}}\\
&=C\lambda^{\frac{4N-(N-2)p}{2(N+2)}}\Big(\int_{\mathbb{R}^{N}}|\nabla v|^{2}dx\Big)^{\frac{(p-2)N}{2(N+2)}},
\end{align*}
then $$I(v)\geq\frac{1}{2}\int_{\mathbb{R}^{N}}|\nabla v|^{2}dx-C\lambda^{\frac{4N-(N-2)p}{2(N+2)}}\Big(\int_{\mathbb{R}^{N}}|\nabla v|^{2}dx\Big)^{\frac{(p-2)N}{2(N+2)}}.$$
Under the assumptions, we have \,$\|\nabla v_{n}\|^{2}_{2}$\, is bounded. Thus, \,$f(v_{n})$\, is bounded in \,$H^{1}(\mathbb{R}^{N})$\,. By (6) of Lemma \ref{Lem2.1} and the above estimate, we have
\begin{align*}
\|v_{n}\|^{2}_{2}=\int_{\mathbb{R}^{N}}|v_{n}|^{2}dx
&=\int_{|v_{n}|\leq1}|v_{n}|^{2}dx+\int_{|v_{n}|\geq1}|v_{n}|^{2}dx\\
&\leq C_{1}\lambda+\int_{\mathbb{R}^{N}}|f(v_{n})|^{4}dx\\
&\leq C_{1}\lambda+C(\int_{\mathbb{R}^{N}}|f(v_{n})|^{2}dx)^{\frac{4}{N+2}}
(\int_{\mathbb{R}^{N}}|\nabla v_{n}|^{2}dx)^{\frac{N}{N+2}},
\end{align*}
therefore, $(v_n)$ is bounded in $ H^{1}(\mathbb{R}^{N})$.
\end{proof}


\section{The multiplicity of non-radial normalized solutions}

This section is mainly dedicated to proving Theorem \ref{Thm1.1}. In order to prove the multiplicity of non-radial normalized solutions, let us recall the constraint
\begin{equation*}
\mathbb{Y}_{\lambda}=\mathbf{Y}_{\mathbf{G}}\cap Y_{\lambda}=\{v\in \mathbf{Y}_{\mathbf{G}}, \|f(v)\|^{2}_2 =\lambda \},
\end{equation*}
Now we define the minimax values for $\lambda>0$ and positive integer $k$,
$$\beta_{\lambda, k}:= \inf_{E\in \Gamma_{\lambda, k}}\sup_{v\in E} I(v), $$
where
$$\Gamma_{\lambda, k}:= \{E\in \Sigma(\mathbb{Y}_{\lambda})| \gamma(E) \geq k\},$$
$$\Sigma(\mathbb{Y}_{\lambda})=\{E\in \mathbb{Y}_{\lambda} \;|\; E \;\mbox{is closed and}\; E=-E\}$$ and $\gamma (\cdot)$ is the genus (see \cite{R65CBMS}).

Before proving Theorem \ref{Thm1.1}, we need to prove the multiplicity of non-radial normalized solutions of equation (\ref{deq1.8}) by verifying the following theorem.
\begin{theorem}\label{THm3.1}\rm{  Assume  $m\in [2,\frac{N}{2}]$ is a fixed integer, $N\geq 4$, $N-2m\neq 1$ and $p\in (2,4+\frac{4}{N})$. Then the following statements hold.
\begin{description}
  \item[(i)] for $p\in(2, 2+\frac{4}{N})$, (\ref{deq1.8}) has infinitely many pairs of non-radial solutions under the constraint $\mathbb{Y}_{\lambda}$, for all $\lambda>0$;
  \item[(ii)] for $p\in [2+\frac{4}{N}, 4+\frac{4}{N})$, and  each $k\in \mathbb{N}$, there exist a $\lambda_{k}>0, $  such that (\ref{deq1.8}) has at least $k$ pairs of distinct non-radial solutions under the constraint $\mathbb{Y}_{\lambda}$, for all $\lambda>\lambda_{k}$;
  \item[(iii)] for fixed $\lambda$, if $(v_{k})$ is the critical points sequence of $\beta_{\lambda, k}$, $k\in \mathbb{N}$, then $I(v_{k})=\beta_{\lambda, k}<0$, and $I(v_{k})\rightarrow 0$, as $k\rightarrow \infty$.
\end{description}
}
\end{theorem}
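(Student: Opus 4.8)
The plan is to realise each $\beta_{\lambda,k}$ as a critical value of $I$ restricted to the symmetric $C^1$ manifold $\mathbb{Y}_\lambda\subset H^1(\mathbb{R}^N)$, by establishing a Palais--Smale type compactness at every level below $0$ and then running the symmetric minimax principle together with the elementary properties of the genus. First I would record that $\mathbb{Y}_\lambda$ is indeed a $C^1$ submanifold and $0\notin\mathbb{Y}_\lambda$: with $G(v):=\int|f(v)|^2$ one has $G'(v)v=2\int f(v)f'(v)v\geq\int|f(v)|^2=\lambda>0$ by Lemma~\ref{Lem2.1}(4), so the constraint is nondegenerate and the Lagrange multiplier theorem applies to any constrained critical point. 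The lower bound $\beta_{\lambda,k}>-\infty$ is immediate from Lemma~\ref{Lem2.3}(i), since every $E\in\Gamma_{\lambda,k}$ satisfies $\sup_E I\geq I_\lambda$.

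Next I would prove $\beta_{\lambda,k}<0$. Since $\mathbf{Y}_{\mathbf G}$ is infinite dimensional (linearly independent functions in $\mathbf{Y}_{\mathbf G}\cap C_0^\infty(\mathbb{R}^N)$ are obtained by averaging a bump over $\mathbf G$ and then antisymmetrising by $v\mapsto\frac12(v-v\circ\tau)$), fix a $k$-dimensional $W_k\subset\mathbf{Y}_{\mathbf G}\cap C_0^\infty$ and a fixed compact ``sphere'' $\Sigma_k:=\{v\in W_k:\ \|f(v)\|_2^2=1\}$, which is radially homeomorphic to $S^{k-1}$ and hence has genus $k$. For $p\in(2,2+\frac4N)$ I would, for $v\in\Sigma_k$, spatially rescale $u:=f(v)$ to mass $\lambda$ and then apply the $L^2$-preserving dilation $w\mapsto w_\beta:=\beta^{N/2}w(\beta\cdot)$, which respects $\mathbf G$ and $\tau$; by Lemma~\ref{Lem2.2},
\[
I\bigl(f^{-1}(w_\beta)\bigr)=J(w_\beta)=\tfrac{\beta^2}{2}\!\int|\nabla w|^2+\beta^{N+2}\!\int|\nabla w|^2w^2-\tfrac{\beta^{N(p-2)/2}}{p}\!\int|w|^p,
\]
and because $\tfrac{N(p-2)}{2}<2<N+2$ the last term dominates as $\beta\to0^+$, so for $\beta$ small this is negative uniformly for $v\in\Sigma_k$; the image $\mathcal E_k$ is then a symmetric compact subset of $\mathbb{Y}_\lambda$ with $\gamma(\mathcal E_k)\geq\gamma(\Sigma_k)=k$ through an odd continuous map, giving $\beta_{\lambda,k}<0$ for every $\lambda>0$. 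For $p\in[2+\frac4N,4+\frac4N)$ this parabolic scaling is no longer favourable, so instead I would use the single spatial dilation $v\mapsto v(\cdot/s)$ with $s=\lambda^{1/N}$, which sends $\Sigma_k$ into $\mathbb{Y}_\lambda$ and satisfies $I(v(\cdot/s))=\tfrac12\lambda^{(N-2)/N}\|\nabla v\|_2^2-\tfrac1p\lambda\,\|f(v)\|_p^p$; since on the fixed set $\Sigma_k$ the coefficients are bounded with $\|f(v)\|_p^p$ bounded below, this is negative uniformly on $\Sigma_k$ once $\lambda>\lambda_k$, and the same genus argument gives $\beta_{\lambda,k}<0$ for all $\lambda>\lambda_k$.

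The main obstacle is the Palais--Smale condition for $I|_{\mathbb{Y}_\lambda}$ at every level $c<0$; this is exactly where the loss of the $L^2$-scaling invariance bites, because $\mathbf{Y}_{\mathbf G}$ embeds compactly into $L^p$ but not into $L^2$, so $L^2$-mass can a priori escape to infinity by spreading. Given $(v_n)\subset\mathbb{Y}_\lambda$ with $I(v_n)\to c<0$ and $(I|_{\mathbb{Y}_\lambda})'(v_n)\to0$, the estimate in the proof of Lemma~\ref{Lem2.3} bounds $\|\nabla v_n\|_2$ in terms of $I(v_n)$ and then $\|v_n\|_2$ via Lemma~\ref{Lem2.1}(6), so $(v_n)$ is bounded in $H^1$; up to a subsequence $v_n\rightharpoonup v$ in $H^1$, $v_n\to v$ a.e.\ and, by the compact embedding, $v_n\to v$ in $L^p$, hence $f(v_n)\to f(v)$ in $L^p$ since $|f(a)-f(b)|\leq|a-b|$. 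Testing the Euler--Lagrange relation against $v_n$ and using Lemma~\ref{Lem2.1}(4) (so $\int f(v_n)f'(v_n)v_n\in[\lambda/2,\lambda]$) shows the multipliers $\mu_n$ are bounded, $\mu_n\to\mu$, and $v$ weakly solves $-\Delta v=|f(v)|^{p-2}f(v)f'(v)+\mu f(v)f'(v)$ in $\mathbb{R}^N$. Writing $H(t)=\tfrac1p|f(t)|^p+\tfrac\mu2|f(t)|^2$, the Pohozaev identity for this limit equation combined with the Nehari identity (test against $v$) yields $I(v)=\tfrac1N\|\nabla v\|_2^2+\tfrac\mu2\|f(v)\|_2^2$; since weak lower semicontinuity and $\|f(v_n)\|_p^p\to\|f(v)\|_p^p$ force $I(v)\leq c<0$ and $v\neq0$, we get $\mu<0$. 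Then, passing to the limit in the identity obtained by testing against $v_n$ and subtracting the Nehari identity for $v$, one finds $B=\mu\,\ell'$ where $B:=\lim\|\nabla v_n\|_2^2-\|\nabla v\|_2^2\geq0$ and $\ell':=\lim\int f(v_n)f'(v_n)v_n-\int f(v)f'(v)v\geq0$; with $\mu<0$ this forces $B=\ell'=0$, and a Brezis--Lieb splitting of $\int f(\cdot)f'(\cdot)(\cdot)$ together with Lemma~\ref{Lem2.1}(4) then forces $\|f(v_n)\|_2^2\to\|f(v)\|_2^2=\lambda$. Combined with $\|\nabla v_n\|_2\to\|\nabla v\|_2$ and Lemma~\ref{Lem2.1}(6) (to upgrade $f(v_n)\to f(v)$ in $L^2$ to $v_n\to v$ in $L^2$ using the $H^1$-bound), this gives $v_n\to v$ in $H^1$, i.e.\ the Palais--Smale condition at $c<0$. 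This removal of the bubble at infinity by Pohozaev plus Nehari is the crux of the argument; it is also where the approximation of Lemma~3.3 enters, substituting for the dilation lemma of \cite{2BL83ARMA} in producing the Euler--Lagrange relation together with the sign of $\mu$.

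Granting the compactness below $0$ and $\beta_{\lambda,k}<0$, I would invoke the standard symmetric minimax/deformation machinery on the $C^1$ symmetric manifold $\mathbb{Y}_\lambda$ (quantitative deformation lemma and the properties of the genus, as in \cite{R65CBMS}): each $\beta_{\lambda,k}$ is a critical value of $I|_{\mathbb{Y}_\lambda}$, realised by a critical point $v_k$ with $I(v_k)=\beta_{\lambda,k}$, and whenever $\beta_{\lambda,k}=\cdots=\beta_{\lambda,k+j}=c<0$ the critical set $K_c$ has $\gamma(K_c)\geq j+1$; since $K_c$ is compact and $0\notin\mathbb{Y}_\lambda$ it has finite genus, so the levels cannot be eventually constant and for each $k$ one obtains at least $k$ pairs of distinct critical points — for all $\lambda>0$ in case (i) and for all $\lambda>\lambda_k$ in case (ii). Each such $v$ is, by nondegeneracy of the constraint, a weak solution of (\ref{deq1.8}). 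For (iii): $\beta_{\lambda,k}$ is nondecreasing in $k$ and $<0$, hence $\beta_{\lambda,k}\to\beta^\ast\leq0$; if $\beta^\ast<0$, then using that $I$ is bounded below (Lemma~\ref{Lem2.3}(i)) and satisfies Palais--Smale on every compact subinterval of $(-\infty,0)$, the classical argument (setting $\beta^{\ast\ast}:=\inf\{c:\gamma(I^c)=\infty\}$ and applying the deformation lemma at the critical value $\beta^{\ast\ast}$) produces a sublevel set with simultaneously infinite and finite genus, a contradiction; so $\beta^\ast=0$, i.e.\ $I(v_k)=\beta_{\lambda,k}<0$ and $I(v_k)\to0$. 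Finally Theorem~\ref{Thm1.1} follows by transporting everything back through $u=f(v)$ via Lemma~\ref{Lem2.2} ($J(u)=I(v)$, $\mathbb{X}_\lambda=f(\mathbb{Y}_\lambda)$), and the solutions are non-radial and sign-changing because $v(\tau x)=-v(x)$ forces any nontrivial $v$ to vanish on the positive-codimension set $\{x_1=x_2\}$, which no nonzero radial function does.
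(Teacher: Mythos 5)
Your proposal is correct and follows essentially the same route as the paper: a genus-based symmetric minimax on the constraint $\mathbb{Y}_{\lambda}$, with $\beta_{\lambda,k}<0$ obtained from finite-dimensional sets of compactly supported symmetric functions plus dilations (Lemma \ref{Lem3.3}), the Palais--Smale condition below level $0$ from the compact embedding $\mathbf{Y}_{\mathbf{G}}\subset L^{p}(\mathbb{R}^{N})$ (Lemma \ref{Lem3.2}, Proposition \ref{Pro3.1}), and the Lagrange multiplier justified by the nondegeneracy of the constraint (Lemma \ref{Lem3.1}). The only notable variation is in the compactness step, where you obtain $\mu<0$ and the recovery of the mass constraint via the Pohozaev identity, whereas the paper's Proposition \ref{Pro3.1} instead tests the equation with $f(v_{n})/f'(v_{n})$ and uses the mean value theorem for $f$ together with the Br\'ezis--Lieb lemma; both mechanisms work, and most of the details you supply are precisely the ones the paper defers to \cite{ZLW22TMNA}.
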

Next, we mainly use the minimax principle and the deformation argument to illustrate the multiplicity. Before that, in order to prove the compactness(Lemma \ref{Lem3.2}), we need the following Lemma \ref{Lem3.1} and Proposition \ref{Pro3.1}.

\begin{lemma}\label{Lem3.1}\rm{
Let $X$ be a Hilbert space, suppose that $G:X\rightarrow \mathbb{R}$ is a $C^{1}$ mapping. Let $\mathcal{M}=\{w\in X|G(w)=0\}$ and $G^{'}(w)w\neq 0$, for all $w\in\mathcal{M}$. Then
\begin{equation}\label{M3.1}
T_{w_{0}}\mathcal{M}=ker ~G^{'}(w_{0}),
\end{equation}
for each $w_{0}\in \mathcal{M}$, where $T_{w_{0}}\mathcal{M}$ is the tangent space of $\mathcal{M}$ at $w_{0}$, and the definition is as follows:
$$T_{w_{0}}\mathcal{M}:=\{h\in X| \exists \epsilon>0, \exists \zeta\in C^{1}((-\epsilon, \epsilon),X) ~s.t.~ w_{0}+\zeta(t)\in\mathcal{M}, \zeta(0)=\theta, \dot{\zeta}(0)=h \}.$$
}
\end{lemma}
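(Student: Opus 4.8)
The plan is to prove the standard characterization of the tangent space of a regular level set in terms of the kernel of the derivative of the defining function. I would prove the two inclusions $T_{w_0}\mathcal{M}\subseteq\ker G'(w_0)$ and $\ker G'(w_0)\subseteq T_{w_0}\mathcal{M}$ separately, with the second inclusion requiring an application of the implicit function theorem, which is where the hypothesis $G'(w)w\neq 0$ on $\mathcal{M}$ is used.

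For the first inclusion, let $h\in T_{w_0}\mathcal{M}$. By definition there is $\epsilon>0$ and a curve $\zeta\in C^1((-\epsilon,\epsilon),X)$ with $w_0+\zeta(t)\in\mathcal{M}$ for all $t$, $\zeta(0)=\theta$ and $\dot\zeta(0)=h$. Then $G(w_0+\zeta(t))=0$ for all $t\in(-\epsilon,\epsilon)$; differentiating this identity at $t=0$ via the chain rule for the $C^1$ map $G$ gives $G'(w_0)\dot\zeta(0)=0$, i.e. $G'(w_0)h=0$, so $h\in\ker G'(w_0)$.

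For the reverse inclusion, fix $h\in\ker G'(w_0)$. The idea is to build the required curve explicitly in the form $\zeta(t)=th+s(t)\,w_0$ for a suitable scalar function $s$, so that $w_0+\zeta(t)=w_0+th+s(t)w_0$ stays on $\mathcal{M}$. Define $\Phi(t,s):=G(w_0+th+s\,w_0)$ for $(t,s)$ near $(0,0)$; this is a $C^1$ function of the real variables $(t,s)$ with $\Phi(0,0)=G(w_0)=0$, and $\partial_s\Phi(0,0)=G'(w_0)w_0\neq 0$ by hypothesis since $w_0\in\mathcal{M}$. By the implicit function theorem there is $\epsilon>0$ and a $C^1$ function $s:(-\epsilon,\epsilon)\to\mathbb{R}$ with $s(0)=0$ and $\Phi(t,s(t))=0$ for all $t$; differentiating $\Phi(t,s(t))=0$ at $t=0$ gives $G'(w_0)h+s'(0)\,G'(w_0)w_0=0$, and since $G'(w_0)h=0$ and $G'(w_0)w_0\neq 0$ we conclude $s'(0)=0$. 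Setting $\zeta(t):=th+s(t)w_0$ gives a $C^1$ curve in $X$ with $w_0+\zeta(t)\in\mathcal{M}$, $\zeta(0)=\theta$ and $\dot\zeta(0)=h+s'(0)w_0=h$, so $h\in T_{w_0}\mathcal{M}$.

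The main technical point — and the only genuine obstacle — is the application of the implicit function theorem in the second inclusion: one must check that $\Phi$ is $C^1$ in the finite-dimensional parameters $(t,s)$ (which follows from $G\in C^1(X,\mathbb{R})$ together with the smoothness of the affine map $(t,s)\mapsto w_0+th+sw_0$) and that the partial derivative in $s$ is nonzero at the base point, which is exactly the nondegeneracy assumption $G'(w)w\neq 0$ for $w\in\mathcal{M}$. Everything else is a routine chain-rule computation. Note also that this ansatz $\zeta(t)=th+s(t)w_0$ is the natural one to adopt here because in the intended application $\mathcal{M}=\mathbb{Y}_\lambda$ is defined by the constraint $G(v)=\|f(v)\|_2^2-\lambda$, and perturbing $w_0$ in the radial-like direction $w_0$ itself is the cheapest way to restore the constraint after the perturbation $th$.
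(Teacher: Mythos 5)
Your proof is correct, and its overall architecture (chain rule for the inclusion $T_{w_0}\mathcal{M}\subseteq\ker G'(w_0)$, implicit function theorem for the reverse inclusion) matches the paper's. The one genuine difference is in how the correction term is produced. The paper writes $X=\ker G'(w_0)\oplus X_1$ for an abstract complement $X_1$, invokes the Banach-space implicit function theorem using that $G'(w_0)\colon X_1\to\mathbb{R}$ is invertible, and obtains a correction $\eta(t)\in X_1$; it then needs the extra step $\dot\eta(0)\in\ker G'(w_0)\cap X_1=\{\theta\}$ to conclude $\dot\zeta(0)=h$. You instead fix the concrete one-dimensional complement $\mathbb{R}w_0$ (legitimate, since $G'(w_0)w_0\neq 0$ forces $w_0\notin\ker G'(w_0)$) and reduce everything to the implicit function theorem for a real function $\Phi(t,s)=G(w_0+th+sw_0)$ of two real variables, with $s'(0)=0$ falling out of a one-line computation. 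This buys two things: the hypothesis $G'(w)w\neq 0$ is used exactly where it is stated rather than only through the weaker consequence $G'(w_0)\neq 0$, and the argument avoids the infinite-dimensional version of the implicit function theorem entirely, which also sidesteps the somewhat loosely worded step in the paper's proof (``we can have $G(w)=0$ in $U$''). Both arguments are valid; yours is the more elementary and, in my view, the cleaner of the two.
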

\begin{proof}
First we show $T_{w_{0}}\mathcal{M}\subset ker ~G^{'}(w_{0})$. Since $G(w_{0}+\zeta(t))=0$ for any $h\in T_{w_{0}}\mathcal{M}$, then we have
$$G^{'}(w_{0})h=\frac{d}{dt}G(w_{0}+\zeta(t))=0.$$ Thus $T_{w_{0}}\mathcal{M}\subset ker ~G^{'}(w_{0})$.\\
To show $ker ~G^{'}(w_{0})\subset T_{w_{0}}\mathcal{M}$, we claim that if $h\in ker ~G^{'}(w_{0})$, then there exists $\eta\in C^{1}((-\epsilon, \epsilon),X_{1})$, $\eta(0)=\theta$ satisfies the following equation:
\begin{equation}\label{M1}
G(w_{0}+th+\eta(t))=0,
\end{equation}
where $X_{1}$ is the complement of $ker ~G^{'}(w_{0})$ and $\epsilon>0$ is small enough.\\
Since $G(w_{0})=0$ and $G^{'}(w_{0}): X_{1}\rightarrow \mathcal{R}$ is an invertible mapping, then by Implicit Function Theorem(for short IFT) we can have $G(w)=0$ in $U$, where $U$ is the $\epsilon$-neighborhood of $w_{0}$ satisfies $w\in U$,  $\|w-w_{0}\|_{X}\leq \epsilon$. So we can set $w=w_{0}+th+\eta(t)$, $t\in(-\epsilon, \epsilon)$, that is, the claim is proved. \\
Now setting $\zeta(t)=th+\eta{t}$, we have $\zeta(0)=\theta$, $\dot{\zeta}(0)=h+\dot{\eta}(0)$. Then from (\ref{M1}), it follows that $G(w_{0}+\zeta(t))=0$ and $G^{'}(w_{0})(h+\dot{\eta}(0))=0$. Thus $\dot{\eta}(0)\in ker ~G^{'}(w_{0})$. However, due to $\dot{\eta}(0)\in X_{1}$, this implies that $\dot{\eta}(0)=\theta$, i.e. $\dot{\zeta}(0)=h$. Therefore, for any $h\in ker ~G^{'}(w_{0})$, we can obtain that $G^{'}(w_{0})h=0$ and $w_{0}+\zeta(t)\in\mathcal{M}$, $\zeta{0}=\theta$, $ \dot{\zeta}(0)=h$, that is, $h\in T_{w_{0}}\mathcal{M}$.
\end{proof}
\begin{remark}\label{Rek3.1}\rm{
This lemma will be used to explain the existence of Lagrange multiplies in the proof of Lemma \ref{Lem3.2} below. It should be point out that due to the dual function $f$ is nonlinear, the method of explaining the existence of Lagrange multipliers in [\cite{2BL83ARMA}, Lemma 3.] is not applicable. In other words, the constrained manifold $\mathbb{Y}_{\lambda}$ in this paper cannot be regarded as a standard sphere in $L^{2}(\mathbb{R}^{N})$. And the above lemma can refer from \cite{C05SVB}.
}
\end{remark}

\begin{proposition}\label{Pro3.1}\rm{
If $(v_{n})$ is bounded in $\mathbb{Y}_{\lambda}$, $\lim_{n\rightarrow \infty}\mu_{n}=\mu<0$ and
\begin{equation}\label{P3.1}
I^{'}(v_n)-\mu_{n}ff^{'}(v_n) \rightarrow I^{'}(v)-\mu ff^{'}(v)=0.
\end{equation}
Then we can have  $\int|v_{n}|^{2}dx\rightarrow \int|v|^{2}dx$.
}
\end{proposition}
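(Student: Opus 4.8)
Since the assertion concerns the real sequence $\big(\int|v_n|^2\big)$, it suffices to argue along subsequences; passing to one, we have $v_n\rightharpoonup v$ in $H^1(\mathbb{R}^N)$, $v_n\to v$ a.e., and — by the compactness of the embedding $\mathbf{Y}_{\mathbf{G}}\hookrightarrow L^q(\mathbb{R}^N)$ for $2<q<2^*:=\frac{2N}{N-2}$ — $v_n\to v$ strongly in each such $L^q$. Since $|f'|\le1$, the sequence $f(v_n)$ is bounded in $H^1$, and because $f$ is odd it lies in $\mathbf{Y}_{\mathbf{G}}$; hence $f(v_n)\rightharpoonup f(v)$ in $H^1$ and $f(v_n)\to f(v)$ in $L^q$ for $2<q<2^*$. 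Together with $|f(t)|\le2^{1/4}|t|^{1/2}$ (Lemma~\ref{Lem2.1}(5)), which bounds $f(v_n)$ in $L^{\frac{4N}{N-2}}$, interpolation gives $f(v_n)\to f(v)$ in $L^q$ for every $q\in(2,\frac{4N}{N-2})$; since $N\ge4$ and $p<4+\frac4N<\frac{4N}{N-2}$, this covers $q=p$ and $q=4$.

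I would then test the relation (\ref{P3.1}) against the bounded sequence $(v_n-v)\subset H^1$. Writing $g(t):=|f(t)|^{p-2}f(t)f'(t)$ and $G(t):=f(t)f'(t)$, this gives
\[
\int\nabla v_n\cdot\nabla(v_n-v)\;-\;\int g(v_n)(v_n-v)\;-\;\mu_n\int G(v_n)(v_n-v)\;=\;o(1).
\]
The first term equals $\|\nabla(v_n-v)\|_2^2+o(1)$ by weak convergence. For the second, $\int g(v_n)(v_n-v)=\int g(v_n)v_n-\int g(v_n)v$, and each piece converges to $\int g(v)v$: one uses $|g(v_n)v_n|\le|f(v_n)|^p$ (Lemma~\ref{Lem2.1}(4)), the strong $L^p$-convergence of $f(v_n)$, a.e. convergence and dominated convergence, the exponent bookkeeping being exactly as in the proof of Lemma~\ref{Lem2.3}. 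In the third term, splitting $\int G(v_n)(v_n-v)=\int[G(v_n)-G(v)](v_n-v)+\int G(v)(v_n-v)$, the last integral tends to $0$ because $G(v)\in L^2$ (as $|G(v)|\le|v|$) and $v_n-v\rightharpoonup0$ in $L^2$. Hence
\[
\|\nabla(v_n-v)\|_2^2\;+\;(-\mu_n)\int\big[G(v_n)-G(v)\big](v_n-v)\;=\;o(1).
\]
A computation from $f'(t)=(1+2f^2(t))^{-1/2}$ yields $G'(t)=(f'(t))^4>0$, so $G$ is strictly increasing and $[G(a)-G(b)](a-b)\ge0$. As $-\mu_n\to-\mu>0$, both terms on the left are eventually nonnegative, so each tends to $0$; in particular $\int[G(v_n)-G(v)](v_n-v)\to0$ and $\nabla v_n\to\nabla v$ in $L^2$.

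It remains to upgrade this to $v_n\to v$ in $L^2$. By the mean value theorem $[G(v_n)-G(v)](v_n-v)=(f'(\xi_n))^4(v_n-v)^2$ with $\xi_n$ between $v_n$ and $v$; since $f$ is odd and increasing, $f^2(\xi_n)\le f^2(v_n)+f^2(v)$, so $(f'(\xi_n))^4=(1+2f^2(\xi_n))^{-2}\ge\big(1+2f^2(v_n)+2f^2(v)\big)^{-2}$ and therefore
\[
\int\frac{(v_n-v)^2}{\big(1+2f^2(v_n)+2f^2(v)\big)^2}\;\longrightarrow\;0.
\]
Given $\varepsilon>0$, fix $M$ large. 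On $\{f^2(v_n)+f^2(v)\le M\}$ the weight is $\ge(1+2M)^{-2}$, so this portion of $\int(v_n-v)^2$ tends to $0$. On the complement, Chebyshev's inequality with $\|f(v_n)\|_2^2=\lambda$ bounds the measure by $2\lambda/M$; using $|v_n|^2\le C|f(v_n)|^4$ on $\{|v_n|\ge1\}$ (Lemma~\ref{Lem2.1}(6)), the uniform integrability of $\{|f(v_n)|^4\}$ (from $f(v_n)\to f(v)$ in $L^4$), and the $L^{2^*}$-boundedness of $(v_n)$, one checks that $\sup_n\int_{\{f^2(v_n)+f^2(v)>M\}}(v_n-v)^2\le\varepsilon$. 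Thus $\limsup_n\int(v_n-v)^2\le\varepsilon$; letting $\varepsilon\to0$ gives $\|v_n-v\|_2\to0$, whence $\int|v_n|^2=\int|v|^2+2\int v(v_n-v)+\int(v_n-v)^2\to\int|v|^2$.

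The main obstacle is this last step. In the semilinear case the sign condition $\mu<0$ alone forces $\|v_n-v\|_2^2\to0$, since the linear term $\int(v_n-v)^2$ appears directly in the identity above; here the nonlinearity of $f$ replaces it by the degenerate weighted integral $\int(f'(\xi_n))^4(v_n-v)^2$, whose weight collapses where $v_n$ is large. Recovering the plain $L^2$-convergence therefore requires invoking both the mass constraint $\|f(v_n)\|_2^2=\lambda$ and the $L^4$-compactness of $f(v_n)$ to show that the large-value region of $v_n$ carries uniformly small $L^2$-mass; and one should note that the embedding $\mathbf{Y}_{\mathbf{G}}\hookrightarrow L^2$ is not compact, so the symmetry alone cannot give this. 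The nonlinear term $\int g(v_n)(v_n-v)\to0$, by contrast, is routine but relies on the same restriction $2<p<4+\frac4N$, $N\ge4$ used in Lemma~\ref{Lem2.3}.
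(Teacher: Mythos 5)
Your proof is correct, but it takes a genuinely different route from the paper's. The paper tests (\ref{P3.1}) against $f(v_n)/f'(v_n)$, passes to the limit in $\int|f(v_n)|^p$ via the Berestycki--Lions compactness theorem, distributes the limit among the individual nonnegative weakly lower semicontinuous terms to get $\int|\nabla v_n|^2\to\int|\nabla v|^2$ and $\int|f(v_n)|^2\to\int|f(v)|^2$, upgrades the latter to $\|f(v_n)-f(v)\|_2\to 0$ by Br\'ezis--Lieb, and finally writes $f(v_n)-f(v)=f'(v+\theta(v_n-v))(v_n-v)$ together with a claimed uniform lower bound $f'(v+\theta(v_n-v))\ge\sqrt{\delta}>0$ to pass from $f(v_n)$ back to $v_n$. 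You instead test against $v_n-v$, use the monotonicity $G'=(f')^4>0$ of $G=ff'$ and the sign of $\mu$ to isolate the nonnegative quantity $\int (f'(\xi_n))^4(v_n-v)^2\to 0$, and then remove the degenerate weight by truncation: Chebyshev with the constraint $\|f(v_n)\|_2^2=\lambda$ controls the measure of the large-value set, and the uniform integrability of $|f(v_n)|^4$ (from the compact embedding plus interpolation up to $L^{4N/(N-2)}$) controls the $L^2$-mass of $v_n$ there. This last block is precisely where your argument is more careful than the published one: since $f'(t)\to 0$ as $|t|\to\infty$, the paper's uniform bound $\sqrt{\delta}$ is not justified as stated and requires exactly the control of the region where $v_n$ is large that your truncation supplies; so your version in effect repairs the weakest step of the paper's proof, at the cost of a longer argument. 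Both routes also deliver the strong gradient convergence needed later in Lemma \ref{Lem3.2}.

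One step you should spell out: in $\int g(v_n)v$, for $N\ge 4$ and $p$ near $4+\frac{4}{N}$ one has $p>2^{*}$, so $v\notin L^{p}$ in general and the $L^{p}$--$L^{p/(p-1)}$ duality suggested by the bound $|g(v_n)|\le|f(v_n)|^{p-1}$ does not close by itself; you need to combine it with $|g(v_n)|\le \frac{1}{\sqrt{2}}|f(v_n)|^{p-2}$ (from $f'(t)\le (2f^{2}(t))^{-1/2}$) and choose an intermediate exponent $s\in[p-2,p-1]$ so that $|f(v_n)|^{s}$ converges in a Lebesgue space whose conjugate lies in $[2,2^{*}]$. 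This is routine bookkeeping of the same kind as in Lemma \ref{Lem2.3}, but it is not literally the $L^{p}$ pairing your sketch suggests.
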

\begin{proof}
If $(v_{n})$ is bounded in $\mathbb{Y}_{\lambda}$, then we can have $v_{n}\rightharpoonup v$ in $\mathbb{Y}_{\lambda}$. Since $\|f(v_n)\|_{22^{*}}<\infty$, $2<p<22^*$, it is easy to know that $[f(v_{n})]^{p}\rightarrow[f(v)]^{p} $ a.e. in $\mathbb{R}^{N}$. Then following from [\cite{1BL83ARMA}, Theorem A.I], we can have
\begin{align*}\int|f(v_{n})|^{p}dx\rightarrow \int|f(v)|^{p}dx.\end{align*}
According to $\lim_{n\rightarrow \infty}\mu_{n}=\mu<0$ and (\ref{P3.1}), multiplying
 $I^{'}(v_{n})-\mu_{n}f(v_{n})f^{'}(v_{n})$ by $\frac{f(v_{n})}{f^{'}(v_{n})}$, and using $f^{''}(v_{n})=-2f(v_{n})f^{'}(v_{n})^{4}$, we can have \begin{align*}\int|\nabla v_{n}|^{2}dx+2\int|\nabla v_{n}|^{2}f^{2}f^{'}(v_{n})^{2}dx-\mu\int|f(v_{n})|^{2}dx-\int|f(v_{n})|^{p}dx\\
\rightarrow
\int|\nabla v|^{2}dx+2\int|\nabla v|^{2}f^{2}f^{'}(v)^{2}dx-\mu\int|f(v)|^{2}dx-\int|f(v)|^{p}dx,
\end{align*}
i.e. $\int|\nabla v_{n}|^{2}dx\rightarrow\int|\nabla v|^{2}dx$ , $\int|f(v_{n})|^{2}dx\rightarrow\int|f(v)|^{2}dx$.\\
By the Lagrange theorem, we can have
\begin{align*}
f(v_{n})-f(v)=f^{'}(v+\theta(v_{n}-v))(v_{n}-v),
\end{align*}
where $\theta=\theta(x)\in(0,1)$, and $f^{'}(v+\theta(v_{n}-v))\geq\sqrt{\delta}>0 $ for some $\delta>0$.
Then following from the Br$\acute{e}$zis-Lieb Lemma, we can have
\begin{align*}
0\leftarrow\|f(v_{n})-f(v)\|^{2}_{2}=\|f^{'}(v+\theta(v_{n}-v))(v_{n}-v)\|^{2}_{2}\geq
\delta\|v_{n}-v\|^{2}_{2},
\end{align*}
i.e. $\int|v_{n}|^{2}dx\rightarrow \int|v|^{2}dx$, as $n\rightarrow \infty$.
\end{proof}

From Lemma \ref{Lem3.1}, Proposition \ref{Pro3.1}, and the fact that the embedding $\mathbf{Y}_{\mathbf{G}}\subset L^{p}(\mathbb{R}^{N})$ is compact, see \cite{L82JFA, W96B}. We can have
\begin{lemma}\label{Lem3.2}
\rm{
For $p\in(2, 4+\frac{4}{N})$, $m\in [2,\frac{N}{2}]$ is a fixed integer, $N\geq 4$, $N-2m\neq 1$. The functional $I_{|\mathbb{Y}_{\lambda}}$ satisfies the $(PS)_{c}$ for all $c<0$.}
\end{lemma}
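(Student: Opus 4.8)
The plan is to verify the Palais–Smale condition for $I$ restricted to $\mathbb{Y}_\lambda$ at any negative level $c$. Suppose $(v_n)\subset\mathbb{Y}_\lambda$ is a sequence with $I(v_n)\to c<0$ and $I_{|\mathbb{Y}_\lambda}'(v_n)\to 0$. First I would show that $(v_n)$ is bounded in $H^1(\mathbb{R}^N)$: from $I(v_n)\to c$ together with the Gagliardo–Nirenberg estimate in Lemma \ref{Lem2.3}, the lower bound
$$I(v)\geq\tfrac12\|\nabla v\|_2^2-C\lambda^{\frac{4N-(N-2)p}{2(N+2)}}\|\nabla v\|_2^{\frac{(p-2)N}{N+2}}$$
with exponent $\frac{(p-2)N}{N+2}<2$ forces $\|\nabla v_n\|_2$ to stay bounded, and then the argument bounding $\|v_n\|_2$ via property (6) of Lemma \ref{Lem2.1} gives boundedness in $H^1$. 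By the compactness of the embedding $\mathbf{Y}_{\mathbf G}\hookrightarrow L^p(\mathbb{R}^N)$ we may pass to a subsequence with $v_n\rightharpoonup v$ in $\mathbf{Y}_{\mathbf G}$, $v_n\to v$ in $L^p(\mathbb{R}^N)$, and $v_n\to v$ a.e.

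Next I would extract the Lagrange multipliers. Since $v_n$ is a constrained critical point sequence on $\mathbb{Y}_\lambda=\{v:G(v):=\|f(v)\|_2^2-\lambda=0\}$, applying Lemma \ref{Lem3.1} with $\mathcal M=\mathbb{Y}_\lambda$ (noting $G'(v)v=2\int f(v)f'(v)v\,dx\neq 0$ for $v\not\equiv0$ by property (4) of Lemma \ref{Lem2.1}) identifies the tangent space with $\ker G'(v_n)$, so there exist $\mu_n\in\mathbb{R}$ with
$$I'(v_n)-\mu_n f(v_n)f'(v_n)\to 0\quad\text{in }H^{-1}.$$
Testing this against $v_n$ and using the a.e. convergence plus the $L^p$-convergence $\int|f(v_n)|^p\,dx\to\int|f(v)|^p\,dx$, one solves for $\mu_n$ and shows $\mu_n\to\mu$ for some limit $\mu$; the sign condition $\mu<0$ should follow from the level being negative, since $I(v_n)\to c<0$ combined with the Nehari-type identity (obtained by testing with $\frac{f(v_n)}{f'(v_n)}$ as in Proposition \ref{Pro3.1}) forces the multiplier to be strictly negative in the limit. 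This is the step I expect to be the main obstacle: controlling $\mu_n$ and especially establishing $\mu<0$ rather than merely $\mu\leq 0$ requires carefully combining the energy level, the mass constraint, and the mass-subcritical exponent range, and it is exactly here that the loss of $L^2$-scaling invariance (flagged in the introduction) removes the usual shortcut.

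Once $\mu_n\to\mu<0$ is in hand, the weak limit $v$ satisfies $I'(v)-\mu f(v)f'(v)=0$, so the hypotheses of Proposition \ref{Pro3.1} are met and we conclude $\|v_n\|_2^2\to\|v\|_2^2$. Combined with $v_n\rightharpoonup v$ in $L^2$ this yields strong convergence $v_n\to v$ in $L^2(\mathbb{R}^N)$. To upgrade to strong convergence in $H^1$, I would revisit the identity from the proof of Proposition \ref{Pro3.1}, which already gives $\int|\nabla v_n|^2\,dx\to\int|\nabla v|^2\,dx$; together with $v_n\rightharpoonup v$ in $H^1$ this gives $v_n\to v$ in $H^1(\mathbb{R}^N)$. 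Finally, one checks $v\in\mathbb{Y}_\lambda$ — the constraint $\|f(v)\|_2^2=\lambda$ passes to the limit since $f(v_n)\to f(v)$ in $L^2$ (again by the Lagrange-mean-value bound $|f(v_n)-f(v)|\leq|v_n-v|$ and dominated convergence), and $v\in\mathbf{Y}_{\mathbf G}$ because $\mathbf{Y}_{\mathbf G}$ is weakly closed. Hence every $(PS)_c$ sequence with $c<0$ has a convergent subsequence, which is precisely the assertion.
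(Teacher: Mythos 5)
Your overall architecture is the one the paper intends (the paper itself outsources the details to \cite{ZLW22TMNA}, relying on Lemma \ref{Lem3.1}, Proposition \ref{Pro3.1} and the compact embedding $\mathbf{Y}_{\mathbf{G}}\subset L^{p}(\mathbb{R}^{N})$): boundedness of the $(PS)_c$ sequence from the Gagliardo--Nirenberg lower bound of Lemma \ref{Lem2.3}, compactness of the symmetric embedding, Lagrange multipliers via the tangent-space characterization of Lemma \ref{Lem3.1} (your verification that $G'(v)v\neq0$ on $\mathbb{Y}_\lambda$ via Lemma \ref{Lem2.1}(4) is correct), and finally Proposition \ref{Pro3.1} to recover strong $L^{2}$ and $H^{1}$ convergence and pass the constraint to the limit. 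All of these steps are sound as written.

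The genuine gap is precisely the step you flag and then skip: showing that the multipliers satisfy $\mu_n\to\mu<0$ strictly. This cannot be left as something that ``should follow'': it is a hypothesis of Proposition \ref{Pro3.1}, and Remark \ref{Rek3.2} singles it out as the exact point where $c<0$ is used. Testing $I'(v_n)-\mu_n f(v_n)f'(v_n)$ against $\frac{f(v_n)}{f'(v_n)}$ gives $\mu_n\lambda=\int|\nabla v_n|^2\bigl(1+2f^2(v_n)f'(v_n)^2\bigr)dx-\int|f(v_n)|^p dx+o(1)$, which yields boundedness of $\mu_n$, but the right-hand side is a difference of two positive terms with no a priori sign. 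Using $1\le 1+2f^2(f')^2\le 2$ together with $I(v_n)\to c$ only gives $\mu_n\lambda\le 4c+\bigl(\tfrac4p-1\bigr)\int|f(v_n)|^p+o(1)$, which is negative exactly when $p\ge 4$; for $2<p<4$ (a large part of the admissible range) this argument fails. To close the gap one typically argues instead that $c<0$ and the compact embedding force $\int|f(v)|^p\ge -pc>0$, hence $v\neq0$; that $v$ weakly solves $-\Delta v=|f(v)|^{p-2}f(v)f'(v)+\mu f(v)f'(v)$; and that a nontrivial $H^1(\mathbb{R}^N)$ solution of this autonomous equation, whose nonlinearity behaves like $\mu s$ near $s=0$, rules out $\mu>0$ (no positive eigenvalues of $-\Delta$ on $\mathbb{R}^N$), while $\mu=0$ is excluded by a Pohozaev-type identity. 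Without an argument of this kind, Proposition \ref{Pro3.1} is inapplicable, the mass may leak to infinity, and your proof does not conclude.
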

\begin{remark}\label{Rek3.2}\rm{ The proof of the above lemma can refer to \cite{ZLW22TMNA}. It should be note that in the above lemma, $c<0$ is necessary. If not, we cannot deduce $\mu<0$, let alone $v\in\mathbb{Y}_{\lambda}$. Moreover, due to $c<0$, it is easy to know that $v\neq 0$.}
\end{remark}
Next, to prepare the application of the minimax principle, we need the following lemma.
\begin{lemma}\label{Lem3.3}
\rm{Let $N\geq 4$, $ N-2m\neq 1$ and $\beta_{\lambda, k}$ defined above. For each positive integer $k$,
\begin{description}
  \item[(i).] If $p\in(2, 2+\frac{4}{N})$, $\beta_{\lambda, k}<0$, for any $\lambda>0$.
  \item[(ii).] If $p\in[2+\frac{4}{N},4+\frac{4}{N})$, there exists $\lambda_{k}>0$  such that
$\beta_{\lambda,k}<0$, for any $\lambda\in(\lambda_{k},\infty)$.
\end{description}}
\end{lemma}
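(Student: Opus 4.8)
The plan is to estimate $\beta_{\lambda,k}$ from above by exhibiting, for each $k$, a suitable symmetric set $E\subset\mathbb{Y}_\lambda$ of genus at least $k$ on which $I$ stays strictly negative, and then to bound $\sup_{v\in E}I(v)$ by a quantity that we can force below zero. First I would fix a $k$-dimensional subspace $V_k\subset\mathbf{Y}_{\mathbf{G}}$ spanned by smooth, compactly supported functions $\varphi_1,\dots,\varphi_k$ that lie in $\mathfrak{Y}_\tau\cap H^1_{\mathbf{G}}(\mathbb{R}^N)$ (such functions exist because $\mathbf{Y}_{\mathbf{G}}$ is an infinite-dimensional space — e.g. take translates/rotations adapted to the $\mathbf{G}$-action and antisymmetrized under $\tau$). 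On the finite-dimensional sphere $S=\{v\in V_k:\|f(v)\|_2^2=\lambda\}$ — which is nonempty and homeomorphic to $S^{k-1}$ for a suitable normalization, since $v\mapsto\|f(v)\|_2^2$ is continuous, even, equals $0$ only at $v=0$, and is coercive on each ray by Lemma \ref{Lem2.1}(6) — all norms are equivalent, so $\gamma(S)=k$ and hence $S\in\Gamma_{\lambda,k}$. Therefore $\beta_{\lambda,k}\le\sup_{v\in S}I(v)$, and it remains to show this supremum is negative.

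For step two I would introduce the natural scaling that respects the $L^2$-type constraint on $f(v)$. Given $w\in V_k$ with $w\not\equiv 0$, consider the family $w_t(x):=w(x/t)$ or a dilation $v_t$ chosen so that $\|f(v_t)\|_2^2=\lambda$; using Lemma \ref{Lem2.1}(2), for $t$ large the profile of $f$ is governed by its square-root growth, so that $\int|f(v_t)|^2\sim t^N\int|v|$ roughly and $\int|\nabla v_t|^2\sim t^{N-2}\int|\nabla v|^2$, while $\int|f(v_t)|^p$ scales like $t^N\int|v|^{p/2}$ in the large-amplitude regime. The point is that the kinetic term carries a lower power of $t$ than the potential term precisely when $p$ is mass-subcritical for the transformed problem, i.e. when $p<4+\tfrac4N$; balancing the constraint then produces $I(v_t)\to-\infty$ (for $p\in(2,2+\tfrac4N)$, uniformly over the compact sphere $S$ for every fixed $\lambda>0$) or $I(v_t)<0$ once $\lambda$ exceeds a threshold $\lambda_k$ depending on $k$ through the chosen $\varphi_i$'s (for $p\in[2+\tfrac4N,4+\tfrac4N)$). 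I would carry out this scaling computation carefully, splitting each integral into the regions $\{|v_t|\le 1\}$ and $\{|v_t|\ge 1\}$ and invoking Lemma \ref{Lem2.1}(3),(5),(6) to pin down the exact powers of $t$ and $\lambda$; the Gagliardo–Nirenberg estimate from the proof of Lemma \ref{Lem2.3} gives the matching upper bounds.

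The main obstacle I anticipate is precisely that the change of variables $u=f(v)$ destroys the clean $L^2$-dilation invariance of the original problem — as the authors emphasize in the introduction — so one cannot simply scale $v\mapsto t^{N/2}v(tx)$ and read off the mass. Instead the admissible rescaling is nonlinear in $v$ and its effect on $\|f(v)\|_2^2$, $\|\nabla v\|_2^2$ and $\int|f(v)|^p$ must be tracked through the two growth regimes of $f$, which makes the bookkeeping delicate; the critical exponent $4+\tfrac4N$ emerges as the threshold where the homogeneity of the kinetic energy ($t^{N-2}$, after accounting for the constraint) first fails to dominate the nonlinear term ($t^{N}$ at large amplitude). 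A secondary technical point is checking uniformity of the estimate over the genus-$k$ sphere $S$: since $S$ is compact and $I$ and the scaling are continuous, the negative upper bound obtained pointwise passes to a uniform one, so $\sup_{v\in S}I(v)<0$, which is exactly $\beta_{\lambda,k}<0$. In case (ii) I would additionally record how $\lambda_k$ is obtained by solving the resulting inequality $c_1\lambda^{\alpha_1}<c_2\lambda^{\alpha_2}$ (with $\alpha_1<\alpha_2$ forced by $p\ge 2+\tfrac4N$) for $\lambda$ large, with $c_1,c_2$ depending on the fixed test functions and hence on $k$.
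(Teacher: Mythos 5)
Your proposal follows essentially the same route as the paper, which itself only sketches the argument: choose a $k$-dimensional space of smooth, compactly supported test functions (the paper invokes density of $C_0^\infty$ in $L^2$ and defers the rest to Lemma~3.4 of \cite{ZLW22TMNA}), obtain a genus-$k$ symmetric set in $\mathbb{Y}_\lambda$, and drive $I$ negative on it by a scaling/amplitude argument that tracks the two growth regimes of $f$. Your version is in fact more careful than the paper's on the one point specific to this nonradial setting — that the test functions must be taken inside $\mathbf{Y}_{\mathbf{G}}=H^1_{\mathbf{G}}(\mathbb{R}^N)\cap\mathfrak{Y}_\tau$ rather than merely in $C_0^\infty(\mathbb{R}^N)$ — which is exactly the issue the paper's Remark~\ref{Rek3.3} is meant to address.
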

\begin{proof}
Since $C^{\infty}_{0}(\mathbb{R}^{N})$ is dense in $L^{2}(\mathbb{R}^{N})$, then there exists $M_{k} \subset L^{2}(\mathbb{R}^{N})$, where $M_{k}$ denotes a $k$-dimensional subspace of $L^{2}(\mathbb{R}^{N})$, and consists of smooth functions with compact supports. Then similar to the proof of Lemma 3.4 in \cite{ZLW22TMNA}, it is easy to prove this lemma.
\end{proof}
\begin{remark}\label{Rek3.3}\rm{
The idea in the above lemma is applicable to the whole $H^{1}(\mathbb{R}^{N})$. In other words, this idea applies not only to the invariant function space $\mathbb{Y}_{\lambda}$ here, but also to the radial function space $H_{r}^{1}(\mathbb{R}^{N})$. The reason is that any function in $H^{1}(\mathbb{R}^{N})$ can be approximated by the function in $C_{0}^{\infty}(\mathbb{R}^{N})$.
}
\end{remark}
In order to prove the multiplicity by the minimax principle, it is inevitable to use the properties of genus and the deformation lemma (see Lemma \ref{Lem3.4} and lemma \ref{Lem3.5} respectively). First, here we recall some definition and notations in \cite{R65CBMS}:
$\Sigma(\mathbb{Y}_{\lambda})$ denotes the set of closed and symmetric (respect to the original) subsets of $\mathbb{Y}_{\lambda}$; $\gamma(E)$ is defined as the least integer $n$, such that there exists an odd mapping $\psi\in C(E, \mathbb{R}/\{0\})$; $K_{c}:=\{v\in\mathbb{Y}_{\lambda}\big|I(v)=c, I^{'}_{|\mathbb{Y}_{\lambda}}(v)=0\}$; $N_{\delta}(E):=\{v\in\mathbb{Y}_{\lambda}, \|v-E\|_{H^{1}}<\delta\}$;
$\tilde{\mathbb{Y}}_{\lambda}:=\{v\in\mathbb{Y}_{\lambda}, I^{'}_{|\mathbb{Y}_{\lambda}}(v)\neq 0\}$; $I^{c}_{|\mathbb{Y}_{\lambda}}:=\{v\in\mathbb{Y}_{\lambda}, ~I(v)\leq c\}$.
\begin{lemma}\label{Lem3.4}\rm{
For $E,~F\in\Sigma(\mathbb{Y}_{\lambda})$, the following statements hold.\noindent
\begin{enumerate}
  \item[(i):] if $\gamma(E)\geq 2$, then there exist infinite points in E;
\item[(ii):]  for $u\neq 0$, $\gamma(\{u\}\cup\{-u\})=1$;
\item[(iii):]  for $E,~ F\in\Sigma(\mathbb{Y}_{\lambda})$, if exist odd  mapping $\psi\in~ C(E,F)$, then $\gamma(E)\leq\gamma(F)$;
\item[(iv):]  if $E\subset F$, $\gamma(E)\leq\gamma(F)$;
\item[(v):]   $\gamma(E\cup F)\leq\gamma(E)+\gamma(F)$;
\item[(vi):]   if E is compact, then $\gamma(E)<\infty$ and there exists $\delta>0$, such that $ N_{\delta}(E)\in\mathbb{Y}_{\lambda}$, $\gamma(N_{\delta}(E))=\gamma(E)$;
\item[(vii):] if $\gamma(F)<\infty$, $\gamma(\overline{E/F})\geq\gamma(E)-\gamma(F)$.
\end{enumerate}}
\end{lemma}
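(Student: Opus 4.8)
The statement is the standard list of properties of the Krasnoselskii genus, so the plan is to prove each item by the classical arguments, relying only on the definition of $\gamma$ recalled above, on the fact that $\mathbb{Y}_\lambda$ is a complete metric space with $0\notin\mathbb{Y}_\lambda$ (so that every $E\in\Sigma(\mathbb{Y}_\lambda)$ is a closed symmetric set avoiding the origin), and on the Tietze extension theorem together with the symmetrization trick $g\mapsto\frac12\bigl(g(\cdot)-g(-\cdot)\bigr)$, which turns any continuous extension of an odd map into an odd one. Items (i) and (ii) are elementary: if a symmetric set $E\not\ni 0$ is finite it is a disjoint union of antipodal pairs $\{u_j,-u_j\}$ and is discrete, so $u_j\mapsto 1$, $-u_j\mapsto-1$ defines an odd continuous map into $\mathbb{R}\setminus\{0\}$ and hence $\gamma(E)\le 1$; taking a single pair proves $\gamma(\{u\}\cup\{-u\})\le 1$, and the reverse inequality is immediate from nonemptiness. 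Item (iv) is the special case of (iii) with $\psi$ the inclusion $E\hookrightarrow F$, and (vii) follows from (iv) and (v): since $E\subset\overline{E\setminus F}\cup F$ with both sets in $\Sigma(\mathbb{Y}_\lambda)$, we get $\gamma(E)\le\gamma(\overline{E\setminus F})+\gamma(F)$, which rearranges to the claim because $\gamma(F)<\infty$.

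For (iii), if $\gamma(F)=n$ and $\phi\in C(F,\mathbb{R}^n\setminus\{0\})$ is odd, then $\phi\circ\psi$ is odd, continuous and nonvanishing on $E$, so $\gamma(E)\le n$. For (v), we may assume $\gamma(E)=m<\infty$ and $\gamma(F)=n<\infty$; take odd maps $\phi\in C(E,\mathbb{R}^m\setminus\{0\})$ and $\psi\in C(F,\mathbb{R}^n\setminus\{0\})$, use Tietze to extend them to continuous maps on $\mathbb{Y}_\lambda$, symmetrize to make the extensions $\hat\phi,\hat\psi$ odd, and check that $x\mapsto(\hat\phi(x),\hat\psi(x))$ is an odd continuous map $E\cup F\to\mathbb{R}^{m+n}\setminus\{0\}$ (it is nonzero on $E$ because $\hat\phi=\phi$ there, and on $F$ because $\hat\psi=\psi$ there), giving $\gamma(E\cup F)\le m+n$.

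The one part needing genuine care is (vi). For finiteness, I would use that $E$ is compact and $0\notin E$: each $x\in E$ has $x\neq-x$, so there is a ball $O_x\ni x$ with $\overline{O_x}\cap(-\overline{O_x})=\emptyset$; a finite subcover $O_{x_1},\dots,O_{x_j}$ of $E$ yields, via bump functions $\chi_i$ supported in $O_{x_i}$ and the odd combinations $\psi_i:=\chi_i(\cdot)-\chi_i(-\cdot)$, an odd continuous map $(\psi_1,\dots,\psi_j)\colon E\to\mathbb{R}^j\setminus\{0\}$ (nonvanishing because on $O_{x_i}$ one has $\chi_i>0$ while $\chi_i(-\cdot)=0$ by disjointness), so $\gamma(E)\le j<\infty$. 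For the neighborhood, set $n=\gamma(E)$, pick odd $\phi\in C(E,\mathbb{R}^n\setminus\{0\})$, extend-and-symmetrize to an odd $\hat\phi\in C(\mathbb{Y}_\lambda,\mathbb{R}^n)$, and observe that $U:=\hat\phi^{-1}(\mathbb{R}^n\setminus\{0\})$ is open, symmetric and contains the compact set $E$; compactness then gives $\delta>0$ with $\overline{N_\delta(E)}\subset U$, so $\overline{N_\delta(E)}\in\Sigma(\mathbb{Y}_\lambda)$ and $\gamma(\overline{N_\delta(E)})\le n$ through $\hat\phi$, while $\gamma(\overline{N_\delta(E)})\ge\gamma(E)=n$ by (iv); hence equality. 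The main obstacle, to the extent one exists in a lemma of this type, is precisely this construction in (vi): producing an odd map with values off the origin out of a finite cover (controlling supports so it never vanishes) and invoking Tietze in $\mathbb{Y}_\lambda$ — everything else is a formal consequence of the definition and of (iii)–(v).
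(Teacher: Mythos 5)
Your proof is correct and is precisely the classical argument for the Krasnoselskii genus that the paper itself does not reproduce, referring instead to \cite{R65CBMS}: all seven items are handled by the standard devices (Tietze extension plus odd symmetrization, a finite cover of the compact set by balls separated from their antipodal images, and the purely formal deductions of (iv) and (vii) from (iii) and (v)), and each step is sound because $0\notin\mathbb{Y}_{\lambda}$ and $\mathbb{Y}_{\lambda}$ is metric, hence normal. The one point worth flagging is that in (vi) you rightly pass to the closed neighborhood $\overline{N_{\delta}(E)}$, whereas the lemma as stated assigns a genus to the open set $N_{\delta}(E)$, which is not an element of $\Sigma(\mathbb{Y}_{\lambda})$ as defined; your version is the correct formulation.
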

\begin{lemma}\label{Lem3.5}\rm{
Let $I\in C^{1}(\mathbb{Y}_{\lambda}, \mathbb R)$ satisfy $(PS)_{c}, ~c<0$. Then there exist $0<\epsilon <\bar{\epsilon}$, and $\eta\in C([0,1]\times\mathbb{Y}_{\lambda}, \mathbb{Y}_{\lambda})$, such that
\begin{enumerate}
\item[(1):]$\eta(0,v)=v$, for all $v\in\mathbb{Y}_{\lambda}$;
\item[(2):] $\eta(t,v)=v$, for all $t\in[0,1]$, if $v\notin I^{c+\bar{\epsilon}}_{|\mathbb{Y}_{\lambda}}/(I^{c-\bar{\epsilon}}_{|\mathbb{Y}_{\lambda}} \cup N_{\frac{\delta}{8}}(K_c))$;
\item[(3):] $\forall t\in[0,1]$, $\eta(t,v)$ is a homeomorphism of $\mathbb{Y}_{\lambda}$ to  $\mathbb{Y}_{\lambda}$;
\item[(4):]$\eta(1,I^{c+\epsilon}_{|\mathbb{Y}_{\lambda}}/N_{\delta}(K_c))\subset I^{c-\epsilon}_{|\mathbb{Y}_{\lambda}}$;
\item[(5):]if $K_{c}=\emptyset$, $\eta(1, I^{c+\epsilon}_{|\mathbb{Y}_{\lambda}})\subset I^{c-\epsilon}_{|\mathbb{Y}_{\lambda}}$;
\item[(6):]if $I(v)$ is even, $\eta(t,v)$ is odd in $v$.
\end{enumerate}}
\end{lemma}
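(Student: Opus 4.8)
The plan is to carry out the classical construction of an equivariant negative-gradient deformation flow on the constraint manifold $\mathbb{Y}_{\lambda}$, the only non-standard points being that $\mathbb{Y}_{\lambda}$ is cut out of the closed linear subspace $\mathbf{Y}_{\mathbf{G}}$ by the \emph{nonlinear} functional $G(v):=\|f(v)\|_{2}^{2}-\lambda$ (not by a quadratic form, so $\mathbb{Y}_{\lambda}$ is not a sphere), and that the flow must be kept inside $\mathbf{Y}_{\mathbf{G}}$ and odd. First I would record that $\mathbb{Y}_{\lambda}$ is a $C^{1}$ Hilbert submanifold of $\mathbf{Y}_{\mathbf{G}}$: since $\lambda>0$ forces $v\neq 0$, and $f(v)f'(v)\in\mathbf{Y}_{\mathbf{G}}$ (because $f$ is odd and $f'$ even, so $f(v)f'(v)$ inherits the $\mathbf{G}$-invariance and the $\tau$-anti-invariance of $v$), we get $G'(v)[f(v)f'(v)]=2\int f^{2}(v)f'^{2}(v)\,dx>0$; Lemma \ref{Lem3.1} then yields $T_{v}\mathbb{Y}_{\lambda}=\ker\big(G'(v)|_{\mathbf{Y}_{\mathbf{G}}}\big)$ together with the orthogonal splitting $\mathbf{Y}_{\mathbf{G}}=T_{v}\mathbb{Y}_{\lambda}\oplus\mathbb{R}\,f(v)f'(v)$, which defines the constrained derivative $I'_{|\mathbb{Y}_{\lambda}}(v)$ as the projection of the free derivative onto $T_{v}\mathbb{Y}_{\lambda}$.

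\textbf{Quantitative step.} By $(PS)_{c}$ with $c<0$ (Lemma \ref{Lem3.2}), every sequence in $K_{c}$ is a $(PS)_{c}$ sequence, hence $K_{c}$ is compact, and by Lemma \ref{Lem3.4}(vi) it admits a closed symmetric neighborhood of finite genus; moreover I would show that there are $\bar\epsilon,\delta>0$ and $\sigma>0$ such that $\|I'_{|\mathbb{Y}_{\lambda}}(v)\|\geq\sigma$ for every $v$ in the annular set $A:=\{v\in\mathbb{Y}_{\lambda}:\,|I(v)-c|\leq\bar\epsilon,\ \mathrm{dist}(v,K_{c})\geq\delta/8\}$ — otherwise one extracts a $(PS)_{c}$ sequence staying away from $K_{c}$, contradicting $(PS)_{c}$.

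\textbf{Vector field and flow.} Next I would build a locally Lipschitz pseudo-gradient field $W$ for $I$ on the regular set $\tilde{\mathbb{Y}}_{\lambda}$, tangent to $\mathbb{Y}_{\lambda}$ (via a locally finite partition of unity), normalized so that $\|W(v)\|\leq 1$ and $\langle I'_{|\mathbb{Y}_{\lambda}}(v),W(v)\rangle\geq\tfrac12\min\{1,\|I'_{|\mathbb{Y}_{\lambda}}(v)\|\}$. When $I$ is even, $G$ is even, so the splitting above is $\mathbb{Z}_{2}$-equivariant and $I'_{|\mathbb{Y}_{\lambda}}$ is odd; replacing $W(v)$ by $\tfrac12\big(W(v)-W(-v)\big)$ produces an odd pseudo-gradient field with the same bounds. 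I then introduce an even, locally Lipschitz cutoff $\chi:\mathbb{Y}_{\lambda}\to[0,1]$ equal to $1$ on $\{|I-c|\leq\bar\epsilon\}\setminus N_{\delta/8}(K_{c})$ and vanishing outside $\{|I-c|<\bar\epsilon\}\setminus N_{\delta/16}(K_{c})$, and solve the ODE $\dot\eta=-\chi(\eta)W(\eta)$, $\eta(0,v)=v$; since the right-hand side is bounded, locally Lipschitz, tangent to $\mathbb{Y}_{\lambda}$, odd and $\mathbf{G}$-invariant, the flow is global, stays in $\mathbb{Y}_{\lambda}$, and each $\eta(t,\cdot)$ is a homeomorphism with inverse $\eta(-t,\cdot)$. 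Properties (1)–(3) and (6) follow immediately (oddness of $\eta$ from uniqueness for the ODE). For (4)/(5) one uses $\frac{d}{dt}I(\eta)=-\chi(\eta)\langle I'_{|\mathbb{Y}_{\lambda}}(\eta),W(\eta)\rangle\leq 0$: a trajectory starting in $I^{c+\epsilon}_{|\mathbb{Y}_{\lambda}}\setminus N_{\delta}(K_{c})$ that never drops below $c-\epsilon$ must, because $\|\dot\eta\|\leq 1$, spend time $\gtrsim\delta$ crossing the annulus $N_{\delta}\setminus N_{\delta/2}$ where $\chi\equiv 1$ and $\|I'_{|\mathbb{Y}_{\lambda}}\|\geq\sigma$, or else stay in $A$ the whole time; in either case $I$ decreases by a fixed amount depending only on $\sigma,\delta$, so choosing $0<\epsilon<\bar\epsilon$ small enough (depending on $\sigma,\delta$) forces $I(\eta(1,v))\leq c-\epsilon$, and $K_{c}=\emptyset$ gives (5) with $N_{\delta}(K_{c})=\emptyset$.

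\textbf{Main obstacle.} The delicate part is the construction in the third step: producing a pseudo-gradient field that is simultaneously tangent to the nonlinearly defined manifold $\mathbb{Y}_{\lambda}$, valued in the symmetric subspace $\mathbf{Y}_{\mathbf{G}}$, and odd, while keeping compatibility with the fact that $(PS)_{c}$ — and hence the uniform lower bound on $\|I'_{|\mathbb{Y}_{\lambda}}\|$ over $A$ — is only available for $c<0$ (cf. Remark \ref{Rek3.2}). Once the field and the nested neighborhoods $N_{\delta/16}\subset N_{\delta/8}\subset N_{\delta/2}\subset N_{\delta}(K_{c})$ are set up correctly, the remaining verifications are the routine bookkeeping of the deformation lemma.
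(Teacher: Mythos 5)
The paper does not actually prove Lemma \ref{Lem3.5}: it is recalled as a known quantitative equivariant deformation lemma, with the reader pointed to \cite{R65CBMS} and the template of \cite{ZLW22TMNA}. Your proposal reconstructs exactly the argument one would expect behind such a citation, and it is essentially correct: you correctly identify the two genuinely non-routine points, namely (a) that $\mathbb{Y}_{\lambda}$ is cut out by the nonlinear functional $G(v)=\|f(v)\|_2^2-\lambda$ rather than by a quadratic form, so tangency must be obtained from Lemma \ref{Lem3.1} together with the transversality $G'(v)[f(v)f'(v)]=2\int f^2(v)f'^2(v)\,dx>0$ (your verification that $f(v)f'(v)$ stays in $\mathbf{Y}_{\mathbf{G}}$, using $f$ odd and $f'$ even, is the right observation), and (b) that the pseudo-gradient field must simultaneously be tangent, odd, and valued in $\mathbf{Y}_{\mathbf{G}}$, which you handle by working inside the closed subspace $\mathbf{Y}_{\mathbf{G}}$ and symmetrizing $W(v)\mapsto\frac12(W(v)-W(-v))$. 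The compactness of $K_c$ and the uniform bound $\|I'_{|\mathbb{Y}_{\lambda}}\|\ge\sigma$ on the annular region both legitimately rest on $(PS)_c$ for $c<0$ (Lemma \ref{Lem3.2}), consistent with Remark \ref{Rek3.2}.

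Two small points to tidy up, neither of which is a genuine gap. First, your cutoff $\chi$ vanishes only outside $\{|I-c|<\bar\epsilon\}\setminus N_{\delta/16}(K_c)$, so it need not vanish on $N_{\delta/8}(K_c)\setminus N_{\delta/16}(K_c)$, whereas property (2) of the lemma demands $\eta(t,\cdot)=\mathrm{id}$ on all of $N_{\delta/8}(K_c)$; you should make $\chi$ vanish on $N_{\delta/8}(K_c)$ and equal to $1$ outside, say, $N_{\delta/4}(K_c)$, and run the travel-time estimate for (4) across the annulus $N_{\delta}(K_c)\setminus N_{\delta/4}(K_c)$ instead. Second, the splitting $\mathbf{Y}_{\mathbf{G}}=\ker G'(v)\oplus\mathbb{R}f(v)f'(v)$ is a topological direct sum ($L^2$-orthogonal, not $H^1$-orthogonal); this does not affect the construction, but the word ``orthogonal'' should be dropped or qualified. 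With these adjustments the remaining verifications are, as you say, the routine bookkeeping of the deformation lemma.
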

Following from the above two lemmas, we can prove the following minimax principle. Note that in \cite{JL19No} there is another proof.
\begin{theorem}\label{Thm3.2}\rm{
Let $I\in C^{1}(\mathbb{Y}_{\lambda}, \mathbb R)$. Assume that $I_{|\mathbb{Y}_{\lambda}}$ is bounded from below and satisfy $(PS)_{c},~c<0$.
 Assume that for each $k\in N, ~\Gamma_{k}\neq\emptyset$. Denote $$c_{k}:=\inf_{E\in\Gamma_{k}}\sup_{v\in E}I(v)$$ as the minimax values. Then we have:
\begin{enumerate}
\item[\underline{(i)}:] for any $k\geq 1$, $c_{k}$ is a critical value of $I_{|\mathbb{Y}_{\lambda}}$ if  $c_{k}<0$, and $-\infty<c_{1}\leq c_{2}\leq\cdots\leq c_{k}\leq\cdots<0$;
\item[\underline{(ii)}:] if $c_{k}=c_{k+1}=\cdots=c_{k+l-1}:=c$, $k,l\geq 1$, then $\gamma(K_{c})\geq l$. Moreover, if $l\geq 2$, $I_{|\mathbb{Y}_{\lambda}}$ has infinitely critical points in $K_{c}$;
\item[\underline{(iii)}:]if $c_{k}<0$ for all $k\geq 1$, then $c_{k}\rightarrow 0^{-}$ as $k\rightarrow\infty$.
\end{enumerate}}
\end{theorem}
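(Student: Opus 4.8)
The plan is to run the standard symmetric (Ljusternik--Schnirelmann type) minimax scheme on the constrained manifold $\mathbb{Y}_{\lambda}$, using only the genus calculus of Lemma~\ref{Lem3.4} and the equivariant deformation Lemma~\ref{Lem3.5}; this is the classical argument of \cite{R65CBMS} adapted to the present constraint (an alternative route is in \cite{JL19No}). Two structural facts make the machinery apply: $I$ is \emph{even} on $\mathbb{Y}_{\lambda}$ (because $f$ is odd, so $|f(-v)|^{p}=|f(v)|^{p}$), and $\mathbb{Y}_{\lambda}=-\mathbb{Y}_{\lambda}$; hence all the classes $\Gamma_{k}$ and the sets built below are genuinely symmetric, and by Lemma~\ref{Lem3.5}(6) the map $\eta(1,\cdot)$ is an odd homeomorphism of $\mathbb{Y}_{\lambda}$ onto itself, which by Lemma~\ref{Lem3.4}(iii) cannot decrease the genus of any symmetric closed set it is applied to. Throughout we use that $0\notin\mathbb{Y}_{\lambda}$, so $0\notin K_{c}$.

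For part~(i): since $I_{|\mathbb{Y}_{\lambda}}$ is bounded from below and $\Gamma_{k}\neq\emptyset$, each $c_{k}$ is a well-defined real number with $c_{k}\geq\inf_{\mathbb{Y}_{\lambda}}I>-\infty$, and $c_{k}\leq c_{k+1}$ because $\Gamma_{k+1}\subset\Gamma_{k}$; negativity of the whole sequence is the regime in which we operate, and is supplied in the application by Lemma~\ref{Lem3.3}. To show $c:=c_{k}<0$ is a critical value, assume $K_{c}=\emptyset$ and apply Lemma~\ref{Lem3.5}(5) to get $0<\epsilon<\bar\epsilon$ and an odd homeomorphism $\eta(1,\cdot)$ with $\eta(1,I^{c+\epsilon}_{|\mathbb{Y}_{\lambda}})\subset I^{c-\epsilon}_{|\mathbb{Y}_{\lambda}}$. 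Choose $E\in\Gamma_{k}$ with $\sup_{E}I<c+\epsilon$, so $E\subset I^{c+\epsilon}_{|\mathbb{Y}_{\lambda}}$; then $\eta(1,E)$ is closed, symmetric, has $\gamma(\eta(1,E))\geq\gamma(E)\geq k$, hence $\eta(1,E)\in\Gamma_{k}$, and $\sup_{\eta(1,E)}I\leq c-\epsilon<c_{k}$ contradicts the definition of $c_{k}$.

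For part~(ii): let $c_{k}=\cdots=c_{k+l-1}=c<0$; then $K_{c}\neq\emptyset$ by~(i), and $(PS)_{c}$ makes $K_{c}$ compact, so $\gamma(K_{c})<\infty$ by Lemma~\ref{Lem3.4}(vi). Suppose $\gamma(K_{c})\leq l-1$. Pick $\delta>0$ with $\gamma(N_{\delta}(K_{c}))=\gamma(K_{c})\leq l-1$ and, via Lemma~\ref{Lem3.5}(4), an $\epsilon\in(0,\bar\epsilon)$ and odd homeomorphism with $\eta(1,I^{c+\epsilon}_{|\mathbb{Y}_{\lambda}}\setminus N_{\delta}(K_{c}))\subset I^{c-\epsilon}_{|\mathbb{Y}_{\lambda}}$. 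Take $E\in\Gamma_{k+l-1}$ with $E\subset I^{c+\epsilon}_{|\mathbb{Y}_{\lambda}}$ and set $A:=\overline{E\setminus N_{\delta}(K_{c})}$; since $N_{\delta}(K_{c})$ is open, $A=E\setminus N_{\delta}(K_{c})\subset I^{c+\epsilon}_{|\mathbb{Y}_{\lambda}}\setminus N_{\delta}(K_{c})$ is closed and symmetric, and Lemma~\ref{Lem3.4}(vii) gives $\gamma(A)\geq\gamma(E)-\gamma(N_{\delta}(K_{c}))\geq(k+l-1)-(l-1)=k$. Then $\eta(1,A)\in\Gamma_{k}$ and $\eta(1,A)\subset I^{c-\epsilon}_{|\mathbb{Y}_{\lambda}}$, so $c_{k}\leq c-\epsilon<c$, a contradiction; hence $\gamma(K_{c})\geq l$. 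If moreover $l\geq 2$, then $\gamma(K_{c})\geq 2$ and Lemma~\ref{Lem3.4}(i) yields infinitely many critical points at level $c$.

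For part~(iii) and the main obstacle: $(c_{k})$ is nondecreasing and bounded above by $0$, hence $c_{k}\to\bar c\leq 0$; suppose $\bar c<0$. If $c_{k_{0}}=\bar c$ for some $k_{0}$, then $c_{k}=\bar c$ for all $k\geq k_{0}$ and part~(ii), applied to arbitrarily long constant blocks, forces $\gamma(K_{\bar c})=\infty$, contradicting the compactness of $K_{\bar c}$ (available since $\bar c<0$). Otherwise $c_{k}<\bar c$ for all $k$; set $m:=\gamma(K_{\bar c})<\infty$, choose $\delta$ with $\gamma(N_{\delta}(K_{\bar c}))=m$, and take $\epsilon\in(0,\bar\epsilon)$ and $\eta$ from Lemma~\ref{Lem3.5}(4) at level $\bar c$. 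Fix $k$ with $\bar c-\epsilon<c_{k}$; since $c_{k+m}\leq\bar c$ there is $E\in\Gamma_{k+m}$ with $E\subset I^{\bar c+\epsilon}_{|\mathbb{Y}_{\lambda}}$, and exactly as in~(ii), $A:=E\setminus N_{\delta}(K_{\bar c})$ satisfies $\gamma(A)\geq(k+m)-m=k$ and $\eta(1,A)\subset I^{\bar c-\epsilon}_{|\mathbb{Y}_{\lambda}}$, so $c_{k}\leq\bar c-\epsilon$, contradicting $c_{k}>\bar c-\epsilon$. Thus $\bar c=0$. The whole argument is essentially bookkeeping once Lemmas~\ref{Lem3.4} and~\ref{Lem3.5} are granted; the one genuine constraint is that both the $(PS)$ condition and the deformation lemma are available only strictly below the zero level, so every minimax class, neighborhood and deformation above must be confined there — which is precisely why negativity of the $c_{k}$ (ultimately Lemma~\ref{Lem3.3}) is indispensable, and why the characteristic feature here is that the critical levels accumulate at $0^{-}$ rather than being bounded away from it.
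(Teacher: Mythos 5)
Your proposal is correct and is exactly the standard Ljusternik--Schnirelmann/Clark-type minimax argument via the genus calculus (Lemma~\ref{Lem3.4}) and the equivariant deformation (Lemma~\ref{Lem3.5}) that the paper intends — the paper itself gives no details and simply defers to Theorem~3.7 of \cite{ZLW22TMNA}. Your observations that $0\notin\mathbb{Y}_{\lambda}$ (so the genus of $K_{c}$ is well defined) and that negativity of the levels is supplied externally by Lemma~\ref{Lem3.3} rather than by the theorem's hypotheses are the right points to flag.
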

\begin{proof}
This theorem can be proved by referring to the Theorem 3.7 of \cite{ZLW22TMNA}.
\end{proof}

{\bf{Proof of Theorem \ref{THm3.1}.}} On the basis of the above lemmas and theorems, we can prove Theorem \ref{THm3.1} very conveniently. Firstly, for each fixed $k\in N$, $\beta_{\lambda, k}<0$, for all $\lambda>0$. Then by using Lemma \ref{Lem3.2}, \ref{Lem3.4}, $(i)$ of Lemma \ref{Lem3.3} and $\underline{(i)}$, $\underline{(iii)}$ of Theorem \ref{Thm3.2}, we can prove the assertion $(i)$ of the Theorem \ref{THm3.1}. Obviously, the assertion $(iii)$ of Theorem \ref{THm3.1} can be obtained from $\underline{(iii)}$ of Theorem \ref{Thm3.2}. Moreover, for $p\in[2+\frac{4}{N}, 4+\frac{4}{N})$, according to $(ii)$ of Lemma \ref{Lem3.3} and the Clark's theorem, it is easy to see (\ref{dnfj1.10}) has at least $k$ distinct radial solutions under $\mathbb{Y}_{\lambda}$.





\section*{Conflict of interest statement}
\addcontentsline{toc}{section}{Conflict of interest statement}

The authors declare no conflicts of interest.

\section*{Data availability statement}
\addcontentsline{toc}{section}{Data availability statement}

The data supporting this study's findings are available from the corresponding author upon reasonable request.


\vspace{-0.11cm}

{
\small

}
\end{document}